\date{\today}
\newcolumntype{C}[1]{>{\centering\arraybackslash$}m{#1}<{$}}
\author{Nicolas Tholozan}
\address{\'ENS -- PSL, CNRS \\
45 rue d'Ulm\\
75005 Paris}
\email{nicolas.tholozan@ens.psl.eu}
\dedicatory{To Fran\c cois Labourie, for his $60^{\textit{th}}+\epsilon$ birthday.}
\title[Cohomological invariants of representation varieties]{Chern--{S}imons theory and cohomological invariants of representation varieties}
\begin{document}

\begin{abstract}
We prove a general local rigidity theorem for pull-backs of homogeneous forms on reductive symmetric spaces under representations of discrete groups. One application of the theorem is that the volume of a closed manifold locally modelled on a reductive homogeneous space $G/H$ is constant under deformation of the $G/H$-structure. The proof elaborates on an argument given by Labourie for closed anti-de Sitter $3$-manifolds.

The core of the work is a reinterpretation of old results of Cartan, Chevalley and Borel, showing that the algebra of $G$-invariant forms on $G/H$ is generated by ``Chern--Weil forms'' and ``Chern--Simons forms''.
\end{abstract}

\maketitle

\tableofcontents

\section*{Introduction}

Throughout the paper, we consider $G$ a connected real semisimple Lie group with finite center, $\sigma$ an involutive automorphism of $G$ and $H$ the subgroup of $G$ fixed by $\sigma$. The right quotient $G/H$ is called a \emph{reductive symmetric space}. The involution $\sigma$ is called a \emph{Cartan involution} when $H$ is compact. In that case, $H$ is a maximal compact subgroup of $G$ and $G/H$ is a Riemannian symmetric space of non-compact type sometimes called \emph{the} symmetric space of $G$. By a result of E. Cartan, every $G$-invariant differential form on $G/H$ is closed, hence the cohomology of the complex of $G$-invariant forms on $G/H$ with values in $\C$ is isomorphic to the algebra of $G$-invariant forms, denoted $\Omega^\bullet_\inv(G/H,\C)$.

We also fix a smooth connected manifold $M$, with universal cover $\tilde M$ and fundamental group $\pi_1(M)$. Let $\rho: \pi_1(M) \to G$ be a representation and $\tilde s: \tilde M \to G/H$ a smooth $\rho$-equivariant map. This map factors to a smooth section $s$ of the flat $G/H$-bundle
\[M\times_\rho (G/H)\]
and the pull-back by $\tilde s$ of any $\omega\in \Omega^\bullet_\inv(G/H,\C)$ factors to a closed form $s^*\omega$ on $M$ (see Section \ref{ss: Flat bundles}). An application of Stokes' formula shows that the de Rahm cohomology class $[s^*\omega]$ of $s^*\omega$ only depends on the homotopy class of $s$. In particular, since the Riemannian symmetric space of $G$ is contractible, all the sections of the bundle $M\times_\rho (G/H)$ are homotopic. Thus, in that case, $[s^*\omega]$ only depends on the representation $\rho$, and we denote it by $\rho^*\omega$.\\

\subsection{Main results}

The purpose of this paper is to prove that the cohomology classes $[s^* \omega]$ do not vary when the representation $\rho$ moves in the representation variety $\Hom(\pi_1(M), G)$. To be more precise, let $(\rho_t)_{t\in [0,1]}$ be a smooth path in $\Hom(\pi_1(M),G)$. Any smooth section $s_0$ of the flat bundle $M\times_{\rho_0}(G/H)$ extends to a smooth family of sections $s_t$ of $M\times_{\rho_t}(G/H)$, unique up to homotopy (see Section \ref{ss: Families of bundles}).

\begin{MonThm} \label{t: Main Theorem}
Let $\omega$ be a $G$-invariant form on $G/H$. For any smooth family $(\rho_t)_{t\in [0,1]}$ of representations of $\pi_1(M)$ into $G$ and any smooth family $(s_t)_{t\in [0,1]}$ of sections of $M\times_{\rho_t} (G/H)$, we have
\[ [s_t^*\omega] = [s_0^*\omega]\]
for all $t\in [0,1]$.
\end{MonThm}

The general formulation of this theorem is meant to encompass two situations: the one where $K$ is a maximal compact subgroup (in which case the choice of $s$ is irrelevant) and the one where $s$ is a local diffeomorphism and $\omega$ a $G$-invariant volume form. We now detail the consequences of Theorem~\ref{t: Main Theorem} in these two contexts.

\subsubsection{Pull-back of group cohomology}

In this section, we take $H = K$ to be a maximal compact subgroup of $G$. The \emph{Van Est isomorphism} $\VE$ identifies the algebra of $G$-invariant forms on $G/K$ to the \emph{continuous cohomology} $\HH^\bullet_c(G,\C)$ (see Section \ref{ss: Continuous group cohomology Proof }).

Approaching the classifying space of any finitely presented group $\Gamma$ by closed manifolds, we get from Theorem \ref{t: Main Theorem} the following corollary:

\begin{MonCoro} \label{c: Main Corollary Cohomology}
Let $\Gamma$ be a finitely presented group. Then, for every $\omega \in \HH^\bullet_c(G,\C)$, the map 
\begin{eqnarray*}
\Hom(\Gamma,G) &\to & \HH^\bullet(\Gamma, \C) \\
\rho & \mapsto & \rho^*\omega 
\end{eqnarray*}
is locally constant on $\Hom(\Gamma,G)$. 
\end{MonCoro}

The cohomology classes $\rho^*\omega$ define topological invariants that are constant on connected components of $\Hom(\Gamma, G)$ and can sometimes help distinguish these components. A famous example of this phenomenon is when $\Gamma$ is the fundamental group of a closed oriented surface $S$ of genus $g\geq 2$ and $G= \PSL(2,\R)$. For a suitably chosen generator $\omega$ of $\HH^\bullet_c(\PSL(2,\R), \C) \simeq \C$, Milnor \cite{Milnor58} proved that the number
\[\int_S \rho^*\omega\]
is an integer contained in the interval $[2-2g, 2g-2]$, called the \emph{Euler class} of the representation $\rho$. Moreover, Goldman \cite{Goldman88} and Hitchin \cite{Hitchin87} proved that the Euler class classifies the connected components of $\Hom(\Gamma, \PSL(2,\R))$.

\subsubsection{Volume of locally homogeneous manifolds}

In this section, $G$ is a semisimple Lie group and $H$ any reductive subgroup. Let $M$ be an orientable manifold of the same dimension as $G/H$. A \emph{$G/H$-structure} on $M$ can be defined as the data of a pair $(\dev, \rho)$ where $\rho:\pi_1(M) \to G$ is a homomorphism called the \emph{holonomy} of the $G/H$-structure and $\dev: \tilde M \to G/H$ is a local diffeomorphism called the \emph{developing map}. 

The hypotheses on $G$ and $H$ imply the existence of a $G$-invariant volume forme $\vol_{G/H}$ on the homogeneous space $G/H$, whose pull-back by the developping factors to a volume form $\dev^*\vol_{G/H}$ on $M$. The \emph{volume} of the $G/H$-structure is by definition the integral of this volume form over $M$
\[\Vol(M, \dev) = \int_M \dev^* \vol_{G/H}~.\]

The Ehresmann--Thurston principle (see \cite{BergeronGelander04}) roughly states that every deformation of the holonomy representation corresponds (in an essentially unique way) to a deformation of the $G/H$-structure of $M$. Theorem \ref{t: Main Theorem} implies that the volume is constant along such deformations.

\begin{MonCoro} \label{c: Main Corollary Volume}
Let $G$ be a semisimple Lie group with finite center and finitely many components and $H$ a reductive subgroup of $G$. Let $M$ be a closed manifold of the same dimension as $G/H$ and $(\dev_t, \rho_t)_{t\in [0,1]}$ a continuous family of $G/H$-structures on $M$. Then 
\[\Vol(M, \dev) = \Vol(M, \dev_0)\]
for all $t\in [0,1]$.
\end{MonCoro}

\subsection{Earlier results}

Theorem \ref{t: Main Theorem} contains as a particular case many topological invariants that have been extensively studied before, such as Toledo invariants of surface group representations (or more generally K\"ahler groups) into Lie groups of Hermitian type \cite{BIW10,Toledo89}, volume of representations of hyperbolic lattices \cite{BBI13,Francaviglia04,KimKim13,BCG07}, complex volume of representations of $3$-manifold groups \cite{GTZ15,BFG14}, or volume of compact quotients of reductive homogeneous spaces. To our knowledge, in all these examples, Theorem \ref{t: Main Theorem} is already known, and our intention here is merely to give a (almost) unified presentation of these different results. Let us discuss some of these applications.

\subsubsection{Toledo invariant of surface group representations}

Let $S$ be a closed surface. The Toledo invariant generalizes the Euler class to representations of $\pi_1(S)$ into a simple Lie group $G$ of Hermitian type. In that case, the cohomology group $\HH^2_c(G,\C)$ is generated by a class $\omega$ (corresponding to a $G$-invariant K\"ahler form on the symmetric space). Given a representation $\rho: \pi_1(S)\to G$, the Toledo invariant of $\rho$ is defined by
\[\tau(\rho) = \int_S \rho^*\omega~.\] 
It is rational, satisfies a Milnor--Wood inequality, and representations with maximal Toledo invariant have strong geometric properties \cite{Toledo89,BIW10}. Unlike in the case of $\PSL(2,\R)$ however, the Toledo invariant does not always distinguish connected components of $\Hom(\Gamma, G)$ (see \cite{BGG06}). The local rigidity of the Toledo invariant (and, in fact, its rationality) is a well-known consequences of Chern--Weil theory: one shows that the Toledo invariant is the degree of the pull-back of a complex automorphic line bundle on $G/K$.

\subsubsection{Volume of representations into $\Isom_+(\H^d)$}

Another well-studied generalization of the Euler class is when $M$ is a closed $d$-manifold and $G= \SO_\circ(d,1)$ is the group of orientation preserving isometries of the hyperbolic $d$-space $\H^d$. The continuous cohomology of $\SO_\circ(d,1)$ is generated in degree $d$ by a class $\omega$ corresponding to the volume form of the hyperbolic $d$-space. Given $\rho: \pi_1(M) \to\SO_\circ(d,1)$, the number $\int_M \rho^*\omega$ is called the  \emph{volume} of the representation $\rho$. 

For even $d$, the volume of $\rho$ is (up to a universal constant) the Euler class of the pull-back of the tangent bundle $T \H^d$ and is thus an integer. In contrast, for odd $d$, the volume cannot be directly related to a characteristic class, and its values remain mysterious. Besson--Courtois--Gallot proved in \cite{BCG07} that the volume is locally constant on $\Hom(\pi_1(M), \SO_\circ (d,1))$, using Schl\"afli's formula for the variation of the volume of a simplex. This proof seems quite specific to hyperbolic geometry. 

An alternative proof in dimension $3$ follows from the identification of the volume of $\rho$ with the imaginary part of its \emph{Cheeger--Chern--Simons invariant}. While this proof was probably known to experts before Besson--Courtois--Gallot's work, it seems it was only written later in \cite{GTZ15}. 

Theorem \ref{t: Main Theorem} gives in particular an alternative proof of Besson--Courtois--Gallot's theorem which, in odd dimension, is based on Chern--Simons theory and is strongly related to \cite{GTZ15}.

\subsubsection{Complex volumes of representations of $3$-manifold groups}

In \cite{GTZ15}, Garoufalidis, Thurston and Zickert generalize the Cheeger--Chern--Simons invariant to representations of the fundamental group of a closed $3$-manifold $M$ into $\SL(d,\C)$ (and, in fact, any simply connected simple complex Lie group).

In that case, the group $G$ itself carries a holomorphic bi-invariant $3$-form~$\omega$, given on the Lie algebra $\g$ by 
\[\omega(u,v,w)= \Tr(u [v,w])~.\]
Let $\rho$ be a representation of $\pi_1(M)$ into $\SL(d,\C)$. Since $\SL(d,\C)$ is connected and simply connected and $M$ has dimension $3$, the flat bundle $M\times_\rho \SL(d,\C)$ always admits a section $s$. Moreover, given two such sections $s_1$ and $s_2$, the difference $\int_M s_1^* \omega - \int_M s_2^*\omega$ is an integral multiple of $4\pi^2$. The \emph{complex volume} of $\rho$ is then defined by 
\[\Vol_\C(\rho) = - i \int_M s^*\omega\in \C/4\pi^2i\Z~.\]
When $n=3$, the real part of $\Vol_\C(\rho)$ is the volume of $\rho$ defined above.

This complex volume can be interpreted as a Chern--Simons class. More precisely, it is the Cheeger--Chern--Simons invariant of the flat connection of monodromy $\rho$ on the trivial complex vector bundle of rank $n$. General arguments of Chern--Simons theory then imply that it is locally constant on $\Hom(\pi_1(M),\SL(n,\C))$. The present work can be viewed as a generalisation of this fact.\\

\begin{rmk}
Both the Toledo invariant, the volume of representions in $\Isom_+(\H^d)$ and the complex volume of representations of $3$-manifold groups have also been defined and investigated of manifolds $M$ with boundary (see \cite{BIW10,BBI13,KimKim13,Francaviglia04,GTZ15,BFG14}).

While some local rigidity theorems have been proven in higher dimension, when $M$ has dimension $2$ or $3$ the Toledo invariant or complex volume are typically not locally constant on character varieties.  It leads to a very different story that we do not consider further here.
\end{rmk}

\subsubsection{Volume anti-de Sitter $3$-manifolds}

Despite its proximity with the local rigidity results mentioned above, Corollary \ref{c: Main Corollary Volume} on the volume of $G/H$-manifolds did not seem to be known before. In the first interesting case where $G/H$ is the \emph{anti-de Sitter space of dimension $3$}, the question whether the volume is invariant under continuous deformations was asked in the influential survey \cite[Question 2.3]{QuestionsAdS}.\\

Recall that the anti-de Sitter $3$-space $\AdS^3$ can be seen as the Lie group $\PSL(2,\R)$ equipped with its Killing metric. Its isometry group is (up to finite index) the group $\PSL(2,\R)\times \PSL(2,\R)$ acting by left and right multiplication. Anti-de Sitter structures on $3$-manifolds have been extensively studied \cite{KulkarniRaymond85,Goldman85,Salein00,KasselThese,Tholozan3} and are now well-understood. In particular, Kassel (partly relying on previous works) proved in \cite{KasselThese} that, up to finite covers, closed anti-de Sitter $3$-manifolds have the form
\[(j, \rho)(\pi_1(S))\backslash \PSL(2,\R)\]
where $S$ is a closed hyperbolic surface, $j:\pi_1(S)\to \PSL(2,\R)$ is the holonomy of the hyperbolic metric on $S$, and $\rho: \pi_1(S)\to \PSL(2,\R)$ is another representation such that there exists a $(j,\rho)$-equivariant contracting map from $\H^2$ to $\H^2$. Since this is an open condition on the pair $(j,\rho)$, closed anti-de Sitter $3$-manifolds have a rich deformation theory, which is completely described in my thesis \cite[Chapter 4]{TholozanThese} (see also \cite{Tholozan3}).

Using Kassel's description of closed anti-de Sitter $3$-manifolds and with an explicit differential geometric computation, I proved in \cite{Tholozan5} the following formula for their volume:
\begin{equation}\label{eq: Volume AdS}
\Vol\left((j,\rho)(\pi_1(S))\backslash \PSL(2,\R)\right) = \frac{\pi^2}{2}(\euler(j) + \euler(\rho))~,
\end{equation}
where $\euler$ denotes the Euler class of a representation. In particular, the volume is constant along continuous deformations of the anti-de Sitter structure.

Upon hearing about this result, Labourie got the intuition that it could be derived from Chern--Simons theory, which lead him to present an alternative proof at an MSRI seminar \cite{LabourieMSRI}. Labourie's proof was never published, but he explained it to me in details, which it sparked the present work.

Let us also mention that another proof of \eqref{eq: Volume AdS} was given by Alessandrini and Li \cite{AlessandriniLi15} using Higgs bundles. Finally, I show in \cite{Tholozan5} that the local rigidity of the volume is not true anymore when one considers finite volume non-compact quotients of $\AdS^3$.

\subsubsection{Volume of compact quotients}

Generalizing my work on the volume of anti-de Sitter $3$-manifolds, I inverstigated more systematically in \cite{Tholozan6} the volume of \emph{compact quotients} of reductive symmetric spaces $G/H$, i.e. quotients of $G/H$ by a discrete subgroup $\Gamma$ of $G$ acting freely, properly discontinuously and cocompactly. There, I proved the following formula:

\begin{equation}\label{eq: Volume compact quotients}
\Vol(\Gamma \backslash G/H) = \int_{[\Gamma]} \iota^*\omega_{G,H}~,
\end{equation}
where $\iota: \Gamma \to G$ is the inclusion, $\omega_{G/H}$ is a class in $\HH^\bullet_c(G,\C)$ depending only on $H$, and $[\Gamma]$ is a certain \emph{fundamental class} in $\HH_\bullet(\Gamma, \Z)$.

In many cases, the class $\omega_{G,H}$ happens  to be a characteristic class, and one deduces the rationality of $\Vol(\Gamma \backslash G/H)$ (and, in particular, its local rigidity). Equation \ref{eq: Volume compact quotients} also allows to readily apply Corollary \ref{c: Main Corollary Cohomology} to prove the volume rigidity of compact quotients of $G/H$.

Corollary \ref{c: Main Corollary Volume} is more general in that it deals with any closed manifold locally modelled on $G/H$, which might not a priori be \emph{complete} (i.e. a quotient of the model). While it is conjectured that every closed manifold locally modelled on a reductive homogeneous space is complete (a variation on Markus' conjecture), this conjecture is far from being solved, and it is not even known in general whether completeness is stable under small deformations. It is thus more satisfying to have a volume rigidity result without any completeness assumption.

\subsection{Cohomology of symmetric spaces}

As we just saw, there are many situations where Theorem \ref{t: Main Theorem} was known to follow either from classical arguments in Chern--Weil or Chern--Simons theory. The core of this work will thus be to prove that these arguments in fact account for all the $G$-invariant forms on $G/H$, i.e. that $\Omega^\bullet_\inv(G/H)$ is generated by \emph{Chern--Weil forms} and \emph{Chern--Simons forms} (which will be properly defined in Section \ref{s: Invariant forms on symmetric spaces}). This will be done by reinterpreting old results of Cartan, Chevalley and  Borel on the cohomology of compact symmetric spaces in light of Chern--Simons theory. \\

Recall that the algebra of $G$-invariant forms on $G/H$ is canonically isomorphic to the cohomology algebra of the \emph{dual compact symmetric space} $G_U/H_U$ (see Section \ref{prop: Comparison cohomology compact dual}). We first establish the following version of Cartan--Borel's structure theorem for this cohomology:
 
\begin{MonThm} \label{t: Cohomology symmetric space}
Let $G/H$ be a compact symmetric space. We have
\[\HH^\bullet(G/H, \C) = \HH^\bullet_\even (G/H, \C) \otimes \HH^\bullet_\odd(G/H,\C)~,\]
where
\begin{itemize}
\item the subalgebra $\HH^\bullet_\even (G/H, \C)$ is the algebra of characteristic classes of the \emph{tautological principal $H$-bundle} $G \to G/H$;

\item the subalgebra $\HH^\bullet_\odd(G/H, \C)$ is the pull-back of  $\HH^\bullet(G,\C)$ under the map
\begin{equation} \label{eq: square map}
\begin{array}{cccc}
\iota_{G,H}: & G/H & \to & G\\
& gH & \mapsto & g \sigma(g)^{-1}~.
\end{array}
\end{equation}
\end{itemize}
\end{MonThm}

The map $\iota_{G,H}$ defined in \eqref{eq: square map} will play a crucial role throughout the paper.

\begin{rmk}
While the above theorem is essentially due Cartan and Borel, we do not know if the interpretation of $\HH^\bullet_\odd(G/H,\C)$ as a pull-back under the map $\iota_{G,H}$ was explicitly stated before.
\end{rmk}

To understand the odd part of the cohomology of $G/H$ it is thus enough to understand the cohomology of a compact semisimple Lie group $G$. The cohomology of $G$ does not consist of Chern--Weil characteristic classes, but (perhaps unsurprisingly to experts), we will prove that it is generated by \emph{Chern--Simons classes}. More precisely, consider the space $X_G= G$ equipped with the action of $G\times G$ by left and right multiplication, and let $P_G$ be the principal $G$-bundle
\[G\times G \to X_G~.\]
The bundle $P_G$ carries two invariant flat connections $\Theta_L$ and $\Theta_R$, corresponding respectively to left and right parallelism on $G$. We will prove the following:

\begin{MonLem} \label{l: Cohomology Lie group Chern-Simons}
The algebra $\HH^\bullet(G,\C)$ is generated by the \emph{Chern--Simons classes} associated to the pair of connections $(\Theta_L, \Theta_R)$ on the principal $G$-bundle $P_G$.
\end{MonLem}

\subsection{Assumptions on $G$}

It is easy to find counterexamples to Theorem~\ref{t: Main Theorem} if one removes the assumption that $G$ is semisimple. For instance, let $\mathbb E^2 \equaldef \O(2)\ltimes \R^2 / \O(2)$ be the Euclidean plane and $\omega$ its translation invariant area form. Let $M$ be the $2$-torus $\R^2/\Z^2$. Consider the family of $\mathbb E^2$-structures on $M$ given by
\[\function{\dev_t}{\tilde M = \R^2}{\E^2 = \R^2}{(x,y)}{(tx,ty)}\]
and
\[\function {\rho_t}{\pi_1(M)= \Z^2}{\R^2 \subset \Isom(\E^2)}{(u,v)}{(tu,tv)}~.\]
Then one easily sees that
\[\int_M \dev_t^* \omega = t^2~,\]
which is thus not constant in $t$.

This example can of course be broadly generalized. Let $G/H$ be a homogeneous space such that $G$ admits a non trivial normalizer $N(G)$ in $\Diff(G/H)$. Then $N(G)$ acts on $\Omega_\inv^\bullet(G/H,\C)$. Let $\omega$ be a $G$-invariant form on $G/H$ which is not fixed by $N(G)$. Then conjugating a representation $\rho:\pi_1(M) \to G$ by $N(G)$ will typically change the class $\rho^* \omega$.\\

\subsection{Structure of the paper}

Though many of the results contained here will perhaps be unsurprising to experts, I could not find references that embrace precisely what I need of Chern--Simons theory and cohomology of symmetric spaces. The paper with thus try to be as self-contained as possible.

In Section \ref{s: Chern--Weil and Chern--Simons}, after recalling some background, we introduce Chern--Weil and Chern--Simons forms associated to connections on a principal bundle, and their relations with characteristic classes. Section \ref{s: Invariant forms on symmetric spaces} is devoted to the description of the algebra $\Omega^\bullet_\inv(G/H)$. Relying on the work of Cartan and Borel, we prove Theorems \ref{t: Cohomology symmetric space} and Lemma \ref{l: Cohomology Lie group Chern-Simons}, and deduce that $\Omega^\bullet_\inv(G/H,\C)$ is generated by Chern--Weil and Chern--Simons forms. Finally in Section \ref{s:Local rigidity}, we recall the classical rigidity results for Chern--Weil and Chern--Simons classes, leading to the proof of Theorem \ref{t: Main Theorem}, and its corollaries. 

\subsection*{Acknoledgements}
This paper is very much indebted to Fran\c cois Labourie, who explained to me the nuts and bolts of Chern--Simons theory and how it could be used to prove volume rigidity of locally homogeneous manifolds. I thank him more generally for the inspiration that his work has been for my research.

\section{Chern--Weil and Chern--Simons classes} \label{s: Chern--Weil and Chern--Simons}

Recall that $G$ is a real connected semisimple Lie group. Let $\g$ denote the Lie algebra of $G$. 

\subsection{Principal bundles}

A \emph{principal $G$-bundle} over a manifold $M$ is a smooth fiber bundle
\[p:P\to M\]
equiped with a smooth right action of $G$ preserving the fibers and acting simply transitively on each fiber.

If $V$ is a manifold equipped with a smooth left action of $G$ (for instance, a linear representation of $G$ or a $G$-homogeneous manifold), one can associate to any principal bundle $P$ a fiber bundle 
\[P\times_G V = P \times V/\langle (p,v)\sim (pg, g^{-1}v), g\in G\rangle\]
called the \emph{associated $V$-bundle}. In particular, the \emph{adjoint bundle} of a principal bundle $P$ is the vector bundle
\[\Ad(P) = P\times_G \g\]
associated to the adjoint representation of $G$.

Let $P$ be a principal $G$-bundle over a manifold $M$ and $H$ a Lie subgroup of $G$. The right quotient $P/H$ is canonically isomorphic to the associated bundle $P\times_G (G/H)$. A \emph{reduction of structure group to $H$} of $P$ is a principal $H$-bundle $P'$ equipped with a bundle map $\phi: P'\to P$ that commutes with the right $H$-action. The image of $\phi$ contains a unique right $H$-orbit in each fiber of $P$ and thus factors to a section of the associated bundle $P/H$. Conversely, the preimage in $P$ of a section of $P/H$ is a reduction of structure group of $P$ to $H$.

A \emph{gauge transformation} of a principal bundle $\pi: P\to M$ is a bundle automorphism (i.e. a diffeomorphism $h:P\to P$ such that $\pi\circ h = \pi$) that commutes with the right action of $G$. The group of gauge transformations is canonically identified with the group of smooth sections of the associated bundle 
\[\Aut(P) \equaldef P\times_G G~,\]
where $G$ acts on itself by conjugation.

\subsection{Connections, curvature}
Every $u\in \g$ defines a vector field on $P$ by taking the derivative of the $G$-action, that we denote by $X_u$. The vector fields $X_u$ are tangent to the fibers of the bundle, and the map $u\mapsto X_u$ defines a trivialization of the subbundle $T F$ of $TP$ tangent to the fibers.

\begin{defi}
A \emph{(principal) connection} on $P$ is a $1$-form $\Theta$ with values in $\g$, satisfying the following properties:
\begin{itemize}
\item $\Theta(X_u) =u$ for all $u\in \g$
\item $g^* \Theta = \Ad_g \circ \Theta$ for all $g\in G$.
\end{itemize}
\end{defi}

The difference between two principal connections $\Theta$ and $\Theta'$ is a $1$-form with values in $\g$ satisfying the following properties:
\begin{itemize}
\item $(\Theta- \Theta')(X_u) = 0$ for all $u\in \h$,
\item $g^*(\Theta-\Theta') = \Ad_g\circ (\Theta-\Theta')$ for all $g\in G$. 
\end{itemize}
It thus factors to a $1$-form on $M$ with values in $\Ad(P)$. Hence the space $\Conn(P)$ of principal connections on $P$ is an affine space over the space $\Omega^1(M,\Ad(P))$.

The kernel of a principal connection is a distribution transverse to $TF$ and defines an Ehresmann connection with holonomy in $G$. The $2$-form
\[R_\Theta = \d \Theta + \frac12 [\Theta, \Theta]\]
with values in $\g$ has the following properties:
\begin{itemize}
\item $R_\Theta(X_u, \cdot) = 0$ for all $u\in \g$,
\item $g^* R_\Theta = \Ad_g \circ R_\Theta$ for all $g\in G$.
\end{itemize}
It follows that this form factors to a $2$-form on $M$ with values in $\Ad(P)$ called the \emph{curvature form} of $\Theta$, and that we still denote $R_\Theta$. 


Recall that, given a smooth map $f$ from a manifold $N$ to $M$, one can pull-back a principal bundle $P\to M$ by setting
\[f^*P = \{(x,p)\in N\times P\mid \pi(p)= f(x)\}~.\]
If $P$ is equipped with a principal connection $\Theta$, then $f^*\Theta$ is a principal connection on $f^*P$ with curvature form $f^*R_\Theta$.

\subsection{Flat bundles} \label{ss: Flat bundles}

The curvature form $R_\Theta$ vanishes if and only if the distribution $\ker(\Theta)$ is integrable, in which case the connection is called \emph{flat}. The connection $\Theta$ is flat if and only if $P$ locally admits \emph{parallel sections}, i.e. section $s$ such that $s^*\Theta \equiv 0$.

Given $\rho$ a representation of $\pi_1(M)$ into $G$, define 
\[P_\rho = \tilde M \times G/ \langle (x,g)\sim (\gamma\cdot x, \rho(\gamma) g), \gamma \in \pi_1(M) \rangle~.\]
Then $P_\rho$ is a principal $G$-bundle and the ``trivial'' flat connection on $\tilde M \times G$ factors to a flat connection $\Theta_\rho$ on $P_\rho$.

Conversely if $P$ is a principal bundle equipped with a flat connection $\Theta$, local parallel sections globalize over the universal cover, and one deduces that $(P,\Theta)$ is isomorphic to $(P_\rho,\Theta_\rho)$ for a representation $\rho$ called the \emph{holonomy} of the flat connection. Actually, the holonomy is only defined up to conjugation in $G$ (corresponding to the choice of a trivialisation over the universal cover).

Given a left action of $G$ on a manifold $V$, we have a canonical isomorphism
\[P_\rho \times_G V \simeq  M\times_\rho V \equaldef \tilde M \times V/ \langle (x,v)\sim (\gamma\cdot x, \rho(\gamma) g), \gamma \in \pi_1(M) \rangle~.\]
If $s$ is a section of the fiber bundle $M\times_\rho V \to M$, the lift $\tilde s$ of $s$ to $\tilde M$ is a map from $\tilde M$ to $V$ which is \emph{$\rho$-equivariant}, i.e. satisfies
\[\tilde s(\gamma \cdot x) = \rho(\gamma) \cdot \tilde s(x)\]
for all $\gamma \in \pi_1(M)$. Conversely, any $\rho$-equivariant map $\tilde s: \tilde M \to V$ factors to a section $s$ of $M\times_\rho V$. If $\omega$ is a $G$-invariant form on $V$, then the form $\tilde s^* \omega$ on $\tilde M$ is $\pi_1(M)$-invariant and thus factors to a form on $M$ denoted $s^*\omega$.

\subsection{Characteristic classes and the Chern--Weil homomorphism}

Let $EG$ be a contractible CW complex equipped with a free and proper right action of the topological group $G$. The quotient space $BG$ is called a \emph{classifying space} for the (topological) group $G$ and $EG$ a \emph{universal principal $G$-bundle}.

It is universal in the sense that, given any principal $G$-bundle $P\to M$, there exists a continuous map $f_P:M \to BG$, such that $P$ is isomorphic to the pull-back of the bundle $EG$ by $f_P$. Moreover, $f_P$ is unique up to homotopy. In particular, any two classifying spaces are homotopy equivalent.

\begin{defi}
Let $P\to M$ be a principal $G$-bundle. The \emph{algebra of characteristic classes} of $P$ is the image of the homomorphism
\[f_P^*: \HH^\bullet(BG,\C) \to \HH^\bullet(M,\C)~.\]
\end{defi}
Here, the cohomology can a priori be taken with coefficients in an arbitrary domain. 

Let $\Sym^\bullet_\inv(\g^\vee)$ denote the graded algebra of $\C$-valued polynomials on $\g$ invariant under the adjoint action, with grading given by twice the degree. We will always identify homogeneosu polynomials of degree $k$ with symmetric $k$-linear forms. 

Let $P\to M$ be a principal $G$-bundle and $\Theta$ a connection on $P$. For any homogeneous polynomial $f$ in $\Sym^k_\inv(\g^\vee)$, the form
\[\CW_f(\Theta) \equaldef f(R_\Theta)\]
is a well-defined $2k$-form on $M$ with coefficients in $\C$, which we call a \emph{Chern--Weil form}.

Note that Chern--Weil forms are natural with respect to pull-backs: if $P\to M$ is a principal bundle equipped with a connection $\Theta$ and $\phi:N\to M$ is a smooth map, then 
\[\CW_f(\phi^*\Theta) = \phi^* \CW_f(\Theta)~.\]

One can show that $\CW_f(\Theta)$ is closed, and that its de Rham cohomology class $\cw_f(P)$ does not depend on the connection. We thus get a homomorphism of graded algebras
\begin{equation} \label{eq: Morphism de Chern--Weil}
\function{\Phi_P}{\Sym^\bullet_\inv(\g^\vee)}{\HH^\bullet(M,\C)}{f}{\cw_f(P)}~.
\end{equation}

\begin{theo}[Chern--Weil]
There exists a homomorphism of graded algebras
\[\Phi_{EG}: \Sym^\bullet_\inv(\g^\vee) \to \HH^\bullet(BG,\C)\]
such that, for any principal bundle $P\to M$, the following diagram commutes:
\[
\xymatrix{
\Sym^\bullet_\inv(\g^\vee) \ar[r]^{\Phi_{EG}} \ar[dr]_{\Phi_P} & \HH^\bullet(BG,\C) \ar[d]^{f_P^*} \\ 
& \HH^\bullet(M,\C)~.
}
\]
Moreover, when $G$ is compact, $\Phi_{EG}$ is an isomorphism.
\end{theo}

\begin{rmk}
The notation $\Phi_{EG}$ is meaningful here: $\Phi_{EG}$ is formally the homomorphism of \eqref{eq: Morphism de Chern--Weil} of the universal principal bundle, and it is indeed constructed as an inductive limit of $\Phi_{P_n}$ for principal bundles ${P_n}$ ``approaching''~$EG$.
\end{rmk}

\begin{rmk}
If $G$ is not compact, let $K$ be a maximal compact subgroup of $G$. Then $BK = EG/K$ is a classifying space for $K$ and the fibration $BK\to BG$ is a homotopy equivalence since its fibers $G/K$ are contractible. We thus get that $\HH^\bullet(BG,\C)\simeq \HH^\bullet(BK,\C)$. Over a manifold $M$, this translates into the fact that every principal $G$-bundle $P$ admits a reduction of structure group $P'$ to $K$, unique up to homotopy, and the characteristic classes of $P$ are those of $P'$.
\end{rmk}

The integral structure on the cohomology of $BG$ can be transported to $\Sym^\bullet_\inv(\g^\vee)$:
\begin{defi} \label{defi: Integral polynomial}
A polynomial $\Sym^\bullet_\inv(\g^\vee)$ will be called \emph{integral} (resp. rational) if $\Phi_{EG}(f)$ belongs to the image of $\HH^\bullet(BG,\Z)$ (resp. $\HH^\bullet(BG,\Q)$) in $\HH^\bullet(BG,\C)$.
\end{defi}

If $f$ is integral, then for every principal bundle $P\to M$ the Chern--Weil class $c_f(P)$ belongs to $\HH^\bullet(M,\Z)$. More generally, let $c_1,\ldots, c_n$ be a basis of $\HH^\bullet(BG,\Z)$. Then, for any $f\in \\Sym^\bullet_\inv (\g^\vee)$, we can write 
\[\Phi_{EG}(f) = \sum_{i=1}^n \alpha_i c_i~,\quad \alpha_i\in \C~,\]
and get that, for any principal bundle $P\to M$, the Chern--Weil class $\cw_f(P)$ belongs to the cohomology with coefficients in the submodule $\Vect_\Z(\alpha_1, \ldots, \alpha_n)\subset \C$. This gives strong rational properties to Chern--Weil classes, which live a priori in the de Rham cohomology.

\subsection{Chern--Simons forms}

We recall without proofs the general setting of Chern--Simons theory and refer to the initial paper of Chern--Simons \cite{ChernSimons74} for details.\\

Fix a principal $G$-bundle $P$ over a manifold $M$ and a  polynomial $f\in \Sym^k_\inv(\g^\vee)$. Given two connections $\Theta_0, \Theta_1$ on $P$, the difference between the Chern--Weil forms
\[\CW_f(P,\Theta_1)- \CW_f(P,\Theta_0)\]
is exact. Chern--Simons theory provides a way to construct a primitive to this form, which is well defined up to an \emph{exact} term.

Recall that the space $\Conn(P)$ of principal connections on $P$ is an affine space over $\Omega^1(M,\Ad(P))$.
Given $\Theta\in \Conn(P)$ and $\Psi \in T_\Theta \Conn(P) = \Omega^1(M,\Ad(P))$, we set 
\[A_f(\Psi) = k f(\Psi, R_\Theta, \ldots, R_\Theta)~.\]

We see the map $A_f$ as a $1$-form on $\Conn(P)$ with values in $\Omega^{2k-1}(M,\C)$. We will denote by $\mathrm D$ the exterior derivative on $\Conn(P)$, to avoid confusion with the exterior derivative on $M$. In particular, $\mathrm D A_f$ is a $2$-form on $\Conn(P)$ with values in $\Omega^{2k-1}(M,\C)$, while
\[\d A_f: \Psi \mapsto \d (A_f(\Psi))\]
is a $1$-form on $\Conn(P)$ with values in $\Omega^{2k}(M,\C)$.

With a bit of familiarity with differential calculus on a vector bundle with connection, one can prove the following formulae:
\begin{equation} \label{eq: small d of the Chern--Simons action}
\d A_f(\Psi) = k f(\d \Psi + [\Theta, \Psi], R_\Theta, \ldots, R_\Theta) = \dt_{\vert t= 0} \CW_f(\Theta + t \Psi)~.
\end{equation}

\begin{equation} \label{eq: capital D of the Chern--Simons action}
\mathrm D A_f(\Psi_1, \Psi_2) = -k (k-1) \d f(\Psi_1, \Psi_2, \Omega_\Theta, \ldots, \Omega_\Theta)~.
\end{equation}

The identity \eqref{eq: small d of the Chern--Simons action} can be re-written as
\begin{equation} \label{eq: dA = DC}
\d A_f= \mathrm D \CW_f~,
\end{equation}
where $\CW_f$ is seen as a function on $\Conn(P)$ with values in $\Omega^{2k}(M,\C)$.

The identity \eqref{eq: capital D of the Chern--Simons action} implies that $\mathrm D A_f$ has coefficients in the space of exact forms on $M$.

\begin{defi}
Given a homogeneous $G$-invariant polynomial $f$ on $\g$, we define the \emph{Chern--Simons form} of a piecewise smooth path $(\Theta_t)_{t\in [0,1]}$ in $\Conn(P)$ by
\[\CS_f((\Theta_t)_{t\in [0,1]}) = \int_{t=0}^1 A_f(\dot \Theta_t) \d t~.\]

In particular we define the Chern--Simons form of a pair of connections  $(\Theta_0, \Theta_1)\in \Conn(P)^2$ as the Chern--Simons form of the straight path between them:
\[\CS_f(\Theta_0, \Theta_1) = \CS_f(((1-t)\Theta_0 + t \Theta_1)_{t\in [0,1]})~.\]
\end{defi}

Like Chern--Weil forms, Chern--Simons forms are natural with respect to pull-backs: if $P\to M$ is a principal bundle equiped with a pair of connections $\Theta_0, \Theta_1$ and $\phi:N\to M$ is a smooth map, then 
\[\CS_f(\phi^*\Theta_0, \phi^*\Theta_1) = \phi^* \CS_f(\Theta_0,\Theta_1)~.\\ \]

From \eqref{eq: dA = DC}, we get that the Chern--Simons form is a primitive of the difference between the Chern--Weil forms:

\begin{prop}
For any piecewise smooth path $(\Theta_t)_{t\in [0,1]}$ in $\Conn(P)$, we have
\[\d \CS_f((\Theta_t)_{t\in [0,1]}) = \CW_f(\Theta_1)- \CW_f(\Theta_0)~.\]
\end{prop}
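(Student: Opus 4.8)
The plan is to reduce the statement to the identity \eqref{eq: dA = DC}, which already contains the nontrivial differential-geometric input; the remaining work is just the fundamental theorem of calculus together with the observation that the de Rham differential on $M$ commutes with integration over the path parameter.

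First I would treat a genuinely smooth path $(\Theta_t)_{t\in[0,1]}$. Since $\CW_f(\Theta)=f(R_\Theta,\dots,R_\Theta)$ depends polynomially (of degree $2k$) on $\Theta$, the map $\CW_f\colon\Conn(P)\to\Omega^{2k}(M,\C)$ is smooth, and composing it with $t\mapsto\Theta_t$ the chain rule gives $\tfrac{d}{dt}\CW_f(\Theta_t)=(\mathrm D\CW_f)_{\Theta_t}(\dot\Theta_t)$. By \eqref{eq: dA = DC} this equals $(\d A_f)_{\Theta_t}(\dot\Theta_t)$, and by the very definition of $\d A_f$ as the $1$-form $\Psi\mapsto\d\bigl(A_f(\Psi)\bigr)$ this is the de Rham differential on $M$ of the integrand occurring in $\CS_f$, namely
\[\frac{d}{dt}\CW_f(\Theta_t)=\d\bigl(A_f(\dot\Theta_t)\bigr).\]
Integrating over $t\in[0,1]$ and using that $\d$, being $\C$-linear and local (a finite sum of partial derivatives in any chart on $M$), commutes with $\int_0^1\,\cdot\,\d t$ by differentiation under the integral sign, we obtain
\[\CW_f(\Theta_1)-\CW_f(\Theta_0)=\int_0^1\d\bigl(A_f(\dot\Theta_t)\bigr)\,\d t=\d\int_0^1 A_f(\dot\Theta_t)\,\d t=\d\,\CS_f\bigl((\Theta_t)_{t\in[0,1]}\bigr).\]

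For a general piecewise smooth path I would choose a subdivision $0=t_0<t_1<\dots<t_N=1$ on whose subintervals the path is smooth, apply the smooth case on each $[t_{i-1},t_i]$ to get $\d\int_{t_{i-1}}^{t_i}A_f(\dot\Theta_t)\,\d t=\CW_f(\Theta_{t_i})-\CW_f(\Theta_{t_{i-1}})$, and sum over $i$: the left-hand sides add up to $\d\,\CS_f\bigl((\Theta_t)_{t\in[0,1]}\bigr)$ by additivity of the integral defining $\CS_f$, while the right-hand sides telescope to $\CW_f(\Theta_1)-\CW_f(\Theta_0)$. The integrand $t\mapsto A_f(\dot\Theta_t)$ is piecewise continuous and locally bounded, so all integrals are well defined.

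I do not expect a genuine obstacle: the only points needing a word of care are the chain rule for $\CW_f\colon\Conn(P)\to\Omega^{2k}(M,\C)$, which is harmless because $\CW_f$ is polynomial in $\Theta$, and the interchange of $\d$ with the $t$-integral, which is routine once one works in local coordinates on $M$. All the substance is already encoded in \eqref{eq: small d of the Chern--Simons action}–\eqref{eq: dA = DC}, which we take as given.
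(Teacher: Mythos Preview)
Your argument is correct and is precisely the one the paper has in mind: it states the proposition as an immediate consequence of \eqref{eq: dA = DC} without further details, and your proof simply spells out the integration in $t$ and the interchange of $\d$ with $\int_0^1$ that make this deduction work. The piecewise-smooth case by subdivision and telescoping is the natural way to cover the general statement.
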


This already shows that the difference between the Chern--Simons forms of two paths with the same endpoints is a closed form. In fact, since $\mathrm D A_f$ takes values into the space of exact forms by \eqref{eq: capital D of the Chern--Simons action}, the Stokes formula (in the infinite dimensional space $\Conn(P)$) shows that this difference is exact.

\begin{prop} \label{prop: Chern-Simons loop exact}
For any piecewise smooth path $(\Theta_t)_{t\in [0,1]}$ in $\Conn(P)$, the form
\[\CS_f((\Theta_t)_{t\in [0,1]}) - \CS_f(\Theta_0,\Theta_1)\]
is exact.

In particular, for any $(\Theta_0, \Theta_1, \Theta_2) \in \Conn(P)^3$, the form
\[\CS_f(\Theta_0, \Theta_1) + \CS_f(\Theta_1, \Theta_2) - \CS_f(\Theta_0,\Theta_2)\]
is exact.
\end{prop}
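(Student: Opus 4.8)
The plan is to deduce Proposition~\ref{prop: Chern-Simons loop exact} directly from the two displayed identities \eqref{eq: dA = DC} and \eqref{eq: capital D of the Chern--Simons action} by applying Stokes' theorem in the infinite-dimensional affine space $\Conn(P)$. Concretely, consider the piecewise smooth path $(\Theta_t)_{t\in[0,1]}$ together with the straight path $t\mapsto (1-t)\Theta_0 + t\Theta_1$; since $\Conn(P)$ is affine (in particular simply connected), these two paths, having the same endpoints, bound a piecewise smooth singular $2$-chain $c: [0,1]^2 \to \Conn(P)$ — for instance the ``straight-line homotopy'' $c(s,t) = (1-s)\Theta_t + s\bigl((1-t)\Theta_0 + t\Theta_1\bigr)$. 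Pulling back the $\Omega^{2k-1}(M,\C)$-valued $1$-form $A_f$ by $c$ and integrating, Stokes gives
\[
\int_{\partial c} A_f = \int_c \mathrm D A_f~.
\]
The boundary $\partial c$ consists of the two paths in question (the two sides $s=0$ and $s=1$) plus the two constant paths at the endpoints $\Theta_0$ and $\Theta_1$ (the sides $t=0$ and $t=1$), on which $A_f$ pulls back to zero because $\dot\Theta \equiv 0$ there. Hence the left-hand side is exactly $\CS_f((\Theta_t)_t) - \CS_f(\Theta_0,\Theta_1)$, up to orientation conventions.

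The key step is then to observe that the right-hand side $\int_c \mathrm D A_f$ is an \emph{exact} form on $M$. This is precisely where \eqref{eq: capital D of the Chern--Simons action} enters: it asserts that $\mathrm D A_f(\Psi_1,\Psi_2) = -k(k-1)\,\d f(\Psi_1,\Psi_2,\Omega_\Theta,\ldots,\Omega_\Theta)$ takes values in the space of \emph{exact} $2k$-forms on $M$. Since exterior differentiation $\d$ on $M$ commutes with integration over the compact chain $c$, the integral $\int_c \mathrm D A_f$ equals $\d$ of the corresponding integral of the primitives $f(\Psi_1,\Psi_2,\Omega_\Theta,\ldots,\Omega_\Theta)$, which is a well-defined $(2k-1)$-form on $M$; thus $\int_c \mathrm D A_f$ is exact. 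Combining the two displays yields that $\CS_f((\Theta_t)_t) - \CS_f(\Theta_0,\Theta_1)$ is exact, which is the first assertion.

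For the second assertion, apply the first one to the concatenated piecewise smooth path running straight from $\Theta_0$ to $\Theta_1$, then straight from $\Theta_1$ to $\Theta_2$: its Chern--Simons form is $\CS_f(\Theta_0,\Theta_1) + \CS_f(\Theta_1,\Theta_2)$ by additivity of the defining integral under concatenation, while the Chern--Simons form of the straight path from $\Theta_0$ to $\Theta_2$ is $\CS_f(\Theta_0,\Theta_2)$; the difference is exact by the first part.

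I expect the main obstacle to be purely technical rather than conceptual: making rigorous the use of Stokes' formula in the infinite-dimensional space $\Conn(P)$, i.e. justifying that one may pull back the $\Omega^{\bullet}(M,\C)$-valued forms $A_f$ and $\mathrm D A_f$ along a finite-dimensional chain $c$ and apply the ordinary (finite-dimensional) Stokes theorem to the pulled-back forms on $[0,1]^2$. Since everything reduces after pull-back to a genuinely finite-dimensional computation — $c^*A_f$ is an ordinary $\Omega^{2k-1}(M,\C)$-valued $1$-form on the square, and $\d_{\text{square}}(c^*A_f) = c^*(\mathrm D A_f)$ by naturality of $\mathrm D$ under the smooth map $c$ — this is legitimate, but it is the point that deserves care; alternatively one can bypass the infinite-dimensional language entirely and simply verify, by a direct computation on $[0,1]^2$ using \eqref{eq: dA = DC} and \eqref{eq: capital D of the Chern--Simons action}, that $\d\bigl(\int_{[0,1]}(c^*A_f)(\partial_t)\,\d t\bigr)$ differs from the boundary terms by $\d$ of an explicit $(2k-1)$-form, which is the argument Chern and Simons give in \cite{ChernSimons74}.
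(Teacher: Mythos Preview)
Your proposal is correct and follows exactly the approach sketched in the paper: apply Stokes' formula in the affine space $\Conn(P)$ to a $2$-chain bounded by the two paths, and use \eqref{eq: capital D of the Chern--Simons action} to see that $\mathrm D A_f$ is exact-valued, so the surface integral is exact. (One harmless slip: $\mathrm D A_f$ takes values in exact $(2k-1)$-forms, not $2k$-forms, and the primitive $f(\Psi_1,\Psi_2,\Omega_\Theta,\ldots,\Omega_\Theta)$ has degree $2k-2$.)
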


If, for various reasons, the form $\CS_f(\Theta_0,\Theta_1)$ is closed, then it defines a de Rham cohomology class on $M$ that we denote $\cs_f(\Theta_0,\Theta_1)$. By Proposition~\ref{prop: Chern-Simons loop exact}, the Chern--Simons form of any smooth path from $\Theta_0$ to $\Theta_1$ gives a representative of this class.\\

Let us mention three situations where Chern--Simons forms lead to cohomological invariants.

\subsubsection{Gauge transformations}

Assume that $\Theta_1= h^* \Theta_0$ for some gauge transformation $h$ of $P$. Then $R_{\Theta_1} = \Ad_h^{-1} \circ R_{\Theta_0}$ and, since $f$ is $G$-invariant, we get that $\CW_f(\Theta_1)= \CW_f(\Theta_0)$. Hence $\CS_f(\Theta_0,h^*\Theta_0)$ is closed.

If there exists a smooth path $(h_t)_{t\in [0,1]}$ in the gauge group of $P$ such that $h_0 = \Id_P$ and $h_1 = h$, then one can verify that
\[\cs_f(\Theta_0, h^*\Theta_0) = [\CS_f(h_t^*\Theta_0)_{t\in [0,1]}] = 0~.\]
thus Chern--Simons classes can help distinguish connected components in the gauge group.

In fact, one can interpretate the Chern--Simons class $\cs_f(\Theta_0, h^*\Theta_0)$ as a Chern--Weil class on $M\times \S^1$. Indeed, define a principal bundle $P_h$ over $M\times \S^1$ by 
\[P_h = P\times [0,1] / (p,0) \simeq (h (p), 1)~.\]
Let $p_1 : M\times \S^1 \to M$ denote the projection on the first factor and let $[\d t]$ denote the pull-back to $\HH^1(M\times \S^1)$ of the generator of $\HH^1(\S^1, \Z)$.

\begin{prop} 
For every $f\in \Sym^\bullet_\inv(\g^\vee)$ and every connection $\Theta$ on~$P$, we have
\[p_1^* \cs_f(\Theta, h^* \Theta) \wedge [\d t ] = \cw_f(P_h)~.\]
\end{prop}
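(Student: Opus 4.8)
The plan is to construct on $P_h$ a connection whose Chern--Weil form, when integrated along the $\mathbb{S}^1$-direction, produces the Chern--Simons form of the pair $(\Theta, h^*\Theta)$. First I would build an explicit connection $\hat\Theta$ on the bundle $P_h\to M\times\mathbb{S}^1$. On the piece $P\times[0,1]$, choose a smooth family of connections $\Theta_t$ on $P$ interpolating from $\Theta_0=\Theta$ to $\Theta_1=h^*\Theta$; the naive candidate is $\hat\Theta = \Theta_t + (\text{no }\d t\text{-term})$, i.e.\ at the point $(p,t)$ one declares the connection form to be $\Theta_t$ on the $TP$-directions and to vanish on $\partial_t$. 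One must check this descends to $P_h$ across the gluing $(p,0)\sim(h(p),1)$: this is exactly the condition $\Theta_0 = h^*\Theta_1$, which holds by our choice of endpoints, so $\hat\Theta$ is a well-defined connection on $P_h$ (here it is cleaner to use the straight path $\Theta_t = (1-t)\Theta + t\,h^*\Theta$, matching the definition of $\CS_f(\Theta,h^*\Theta)$).

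Next I would compute the curvature $R_{\hat\Theta}$ and then $\CW_f(\hat\Theta) = f(R_{\hat\Theta})$ as a $2k$-form on $M\times\mathbb{S}^1$. Decomposing with respect to the product structure, $R_{\hat\Theta}$ splits as $R_{\Theta_t} + \d t\wedge \dot\Theta_t$ (the mixed term), since $\hat\Theta$ has no $\partial_t$-component and $\partial_t\Theta_t = \dot\Theta_t$. Because $f$ is multilinear and $\d t\wedge\d t = 0$, only the terms with at most one factor of $\d t\wedge\dot\Theta_t$ survive, giving
\[
\CW_f(\hat\Theta) = f(R_{\Theta_t},\ldots,R_{\Theta_t}) + k\,\d t\wedge f(\dot\Theta_t, R_{\Theta_t},\ldots,R_{\Theta_t})~.
\]
The second summand is precisely $\d t \wedge A_f(\dot\Theta_t)$ in the notation of the excerpt. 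Now apply the fiber integration $\int_{\mathbb{S}^1}$ (integration over the $t$-fiber of $p_1$): the first summand contributes nothing since it contains no $\d t$, while the second integrates to $\int_0^1 A_f(\dot\Theta_t)\,\d t = \CS_f(\Theta,h^*\Theta)$, pulled back via $p_1^*$. Unwinding the definition of $[\d t]$ and the projection formula for fiber integration, this says $\cw_f(P_h) = p_1^*\,\cs_f(\Theta,h^*\Theta)\wedge[\d t]$, which is the claim. (One should note $\CS_f(\Theta,h^*\Theta)$ is closed here, as recalled just above the statement, so $\cs_f(\Theta,h^*\Theta)$ makes sense.)

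The main obstacle is making the gluing and the fiber-integration step rigorous: one must verify carefully that $\hat\Theta$ really is smooth and globally defined on the quotient $P_h$ (the identification $(p,0)\sim(h(p),1)$ must be compatible not only with $\Theta_0 = h^*\Theta_1$ but with the vanishing of the $\d t$-component, which uses that $h$ commutes with the $G$-action and $p\circ h = p$, so $\partial_t$ is well-defined on $P_h$), and that passing to de Rham cohomology is legitimate — i.e.\ that $\CW_f(\hat\Theta)$ is closed (true by Chern--Weil) and that its class is $\cw_f(P_h)$ independently of the choices. A minor subtlety is orientation/normalization conventions for $[\d t]$ and for fiber integration, which fix the sign; I would pin these down so the equality holds on the nose rather than up to sign.
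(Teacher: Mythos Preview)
The paper states this proposition without proof, so there is nothing to compare against; your approach is the standard one and is essentially correct.

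There is, however, a genuine slip in the gluing step. With the identification $(p,0)\sim(h(p),1)$, the condition for $\hat\Theta$ to descend is indeed $\Theta_0 = h^*\Theta_1$, but with your choice $\Theta_0=\Theta$ and $\Theta_1=h^*\Theta$ this reads $\Theta=(h^2)^*\Theta$, which is not a tautology. You must swap the endpoints: take $\Theta_0=h^*\Theta$ and $\Theta_1=\Theta$, so that $h^*\Theta_1=h^*\Theta=\Theta_0$ holds trivially. The fibre integral then produces $\CS_f(h^*\Theta,\Theta)=-\CS_f(\Theta,h^*\Theta)$; the resulting sign is exactly the ``minor subtlety'' you flagged and is absorbed once the orientation of $[\d t]$ is fixed consistently. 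A second point you raise but do not resolve: with the straight-line path the connection $\hat\Theta$ is only $C^0$ across the seam (the $t$-derivatives do not match). Either reparametrise so that $\Theta_t$ is constant near $t=0,1$, or simply observe that $\cw_f(P_h)$ is independent of the connection, compute with a smoothed path, and note that the fibre integral of $\CW_f(\hat\Theta)$ changes only by an exact form under such a reparametrisation (by Proposition~\ref{prop: Chern-Simons loop exact}).
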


\begin{coro} \label{coro: Integral Chern-Simons gauge transformation}
 If $f$ is integral, then $\cs_f(\Theta, h^*\Theta)$ belongs to the image of $\HH^\bullet(M,\Z)$ in $\HH^\bullet(M,\C)$.
\end{coro}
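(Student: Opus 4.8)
The plan is to deduce the statement directly from the preceding proposition, which identifies $p_1^* \cs_f(\Theta, h^*\Theta) \wedge [\d t]$ with the Chern--Weil class $\cw_f(P_h)$ on $M \times \S^1$. First I would invoke the hypothesis that $f$ is integral: by Definition \ref{defi: Integral polynomial} and the discussion following it, $\cw_f(P_h) = \Phi_{EG}(f)$ pulled back along a classifying map, hence $\cw_f(P_h)$ lies in the image of $\HH^\bullet(M\times\S^1, \Z)$ in $\HH^\bullet(M\times\S^1,\C)$. Combining this with the proposition, we obtain that $p_1^* \cs_f(\Theta, h^*\Theta) \wedge [\d t]$ is integral.

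Next I would use the K\"unneth decomposition $\HH^\bullet(M\times \S^1,\Lambda) \simeq \HH^\bullet(M,\Lambda) \oplus \HH^{\bullet-1}(M,\Lambda)\wedge [\d t]$, valid over $\Lambda = \Z$ and over $\Lambda = \C$ compatibly, since $\HH^\bullet(\S^1,\Z)$ is free. Under this isomorphism, wedging with $[\d t]$ is (up to sign) the inclusion of the second summand, and it carries integral classes to integral classes and reflects integrality: a class of the form $\alpha \wedge [\d t]$ with $\alpha \in \HH^{\bullet-1}(M,\C)$ is integral in $\HH^\bullet(M\times\S^1,\C)$ if and only if $\alpha$ is integral in $\HH^\bullet(M,\C)$. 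Applying this with $\alpha = p_1^*\cs_f(\Theta,h^*\Theta)$ shows that $p_1^* \cs_f(\Theta,h^*\Theta)$ is integral, and since $p_1^*$ is injective on cohomology and compatible with the integral structures (it is pulled back from $M$, where $\cs_f(\Theta,h^*\Theta)$ already lives), we conclude that $\cs_f(\Theta, h^*\Theta)$ itself lies in the image of $\HH^\bullet(M,\Z)$ in $\HH^\bullet(M,\C)$.

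The only slightly delicate point is the bookkeeping with the integral structures across the K\"unneth formula, i.e.\ checking that ``$\alpha\wedge[\d t]$ integral $\Rightarrow$ $\alpha$ integral'' rather than just the converse; this is immediate from the freeness of $\HH^\bullet(\S^1,\Z)$ and the naturality of the universal coefficient / K\"unneth exact sequences, so I do not expect any real obstacle. Everything else is a formal consequence of the proposition already established.
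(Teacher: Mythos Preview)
Your proposal is correct and follows exactly the route the paper intends: the corollary is stated without proof immediately after the proposition identifying $p_1^*\cs_f(\Theta,h^*\Theta)\wedge[\d t]$ with $\cw_f(P_h)$, and your argument (integrality of $\cw_f(P_h)$ plus the K\"unneth splitting for $M\times\S^1$, or equivalently fiber integration along $\S^1$) is precisely the implicit deduction. There is nothing to add.
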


\subsubsection{Flat connections}
When $\Theta_0$ and $\Theta_1$ are flat, $\CW_f(\Theta_0) = \CW_f(\Theta_1) = 0$, hence $\CS_f(\Theta_0,\Theta_1)$ is closed. The following immediate proposition is key in proving local rigidity of Chern--Simons invariants:

\begin{prop} \label{prop: Vanishing CS flat family}
Let $(\Theta_t)_{t\in [0,1]}$ be a smooth path of flat connections on $P$. Then 
\[\cs_f(\Theta_0, \Theta_1) = 0~.\]
\end{prop}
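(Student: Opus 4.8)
The plan is to show that $\CS_f(\Theta_0,\Theta_1)$ is not only closed but in fact \emph{exact} when $(\Theta_t)_{t\in[0,1]}$ is a smooth path of flat connections. The key observation is that along such a path the Chern--Weil form $\CW_f(\Theta_t)$ vanishes identically (each $\Theta_t$ is flat, so $R_{\Theta_t}=0$ and $f(R_{\Theta_t},\dots,R_{\Theta_t})=0$), and more importantly so does its variation. First I would recall from \eqref{eq: dA = DC} that $\d A_f = \mathrm D \CW_f$ as forms on $\Conn(P)$; since $\CW_f$ is identically zero along the whole path of flat connections, its derivative $\mathrm D\CW_f$ restricted to vectors tangent to the path also vanishes. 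Concretely, $\d\bigl(A_f(\dot\Theta_t)\bigr) = \dt \CW_f(\Theta_t) = 0$ for each $t$, using the second equality in \eqref{eq: small d of the Chern--Simons action}. Hence each form $A_f(\dot\Theta_t)$ appearing in the integrand of $\CS_f((\Theta_t)_{t\in[0,1]}) = \int_0^1 A_f(\dot\Theta_t)\,\d t$ is itself \emph{closed} on $M$, not merely closed after integration.

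That alone gives that the Chern--Simons form of the path is closed, but to get that it is exact (and thus that the class vanishes) I would invoke Proposition~\ref{prop: Chern-Simons loop exact}: the difference $\CS_f((\Theta_t)_{t\in[0,1]}) - \CS_f(\Theta_0,\Theta_1)$ is exact for any path with endpoints $\Theta_0,\Theta_1$. So it suffices to exhibit \emph{one} path from $\Theta_0$ to $\Theta_1$ whose Chern--Simons form is exact (equivalently, to show the class $\cs_f(\Theta_0,\Theta_1)$ is represented by an exact form via some path). The natural candidate is of course the given path of flat connections. Along it, each $A_f(\dot\Theta_t)$ is closed by the computation above, but I want more: I want the integral $\int_0^1 A_f(\dot\Theta_t)\,\d t$ to be exact.

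The cleanest way to see this is to pass to the thickened picture: consider the manifold $M\times[0,1]$ and the connection $\tilde\Theta$ on $p_1^*P$ whose restriction to the slice $M\times\{t\}$ is $\Theta_t$ (i.e. $\tilde\Theta = \Theta_t + 0\cdot\d t$, or any interpolation in the $\d t$-direction). Since all the $\Theta_t$ are flat, one checks that the curvature $R_{\tilde\Theta}$ has no $M$-component; writing $R_{\tilde\Theta} = (\partial_t\Theta_t)\wedge\d t$ we see $f(R_{\tilde\Theta},\dots,R_{\tilde\Theta})$ vanishes whenever $k\geq 2$ (two $\d t$ factors), so $\CW_f(\tilde\Theta)=0$ on $M\times[0,1]$. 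Now integrate this identity over the fibre $[0,1]$: fibre integration of the closed form $\CW_f(\tilde\Theta)$ over $M\times[0,1]$ relative to $M\times\partial[0,1]$ yields, by the usual transgression/fibre-integration formula, exactly $\CS_f((\Theta_t)_{t\in[0,1]})$ up to an exact term, while $\CW_f(\tilde\Theta)=0$ forces the result to be exact. Combined with Proposition~\ref{prop: Chern-Simons loop exact} this gives that $\CS_f(\Theta_0,\Theta_1)$ is exact, i.e. $\cs_f(\Theta_0,\Theta_1)=0$.

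The step I expect to be the main obstacle is making the fibre-integration argument fully rigorous without developing more machinery than the paper wants: one must be careful that $\CS_f$ of a path is genuinely the fibre integral of $\CW_f$ of the corresponding connection on $M\times[0,1]$ (a variant of the Proposition relating $\cs_f(\Theta,h^*\Theta)$ to $\cw_f(P_h)$, but for a \emph{path} rather than a gauge loop), and that the boundary contributions at $t=0,1$ are the Chern--Weil forms of $\Theta_0,\Theta_1$, which vanish by flatness. An alternative, more elementary route that avoids $M\times[0,1]$ altogether: directly differentiate $\CS_f((\Theta_t)_{t\in[0,a]})$ in $a$ and use that its $\d$-derivative is $\CW_f(\Theta_a)-\CW_f(\Theta_0)=0$, so each infinitesimal piece $A_f(\dot\Theta_a)$ is closed; then use \eqref{eq: capital D of the Chern--Simons action}, which says $\mathrm D A_f$ takes values in exact forms, together with Stokes in $\Conn(P)$ applied to the $2$-simplex spanned by $\Theta_0$, $\Theta_1$ and a point of the path, exactly as in the proof of Proposition~\ref{prop: Chern-Simons loop exact}, to conclude the class is zero. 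Either way the essential input is simply that $\CW_f$ vanishes on flat connections, so $A_f$ is closed along the path and the transgression has nothing to transgress.
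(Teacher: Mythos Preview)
Your proof is correct but vastly overcomplicated. You correctly invoke Proposition~\ref{prop: Chern-Simons loop exact} (as does the paper) to reduce to showing that $\CS_f((\Theta_t)_{t\in[0,1]})$ represents the zero class, and then embark on a fibre-integration argument on $M\times[0,1]$ to prove this form is exact. But look again at the definition: $A_f(\dot\Theta_t) = k\, f(\dot\Theta_t, R_{\Theta_t}, \ldots, R_{\Theta_t})$ with $k-1$ curvature slots, and since $R_{\Theta_t}=0$ for every $t$ along a path of flat connections, this is not merely \emph{closed} but \emph{identically zero} (for $k\geq 2$; for $k=1$ there is nothing to prove since $\Sym^1_\inv(\g^\vee)=0$ on a semisimple algebra). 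Hence $\CS_f((\Theta_t)_{t\in[0,1]}) = \int_0^1 0\,\d t = 0$, and the proposition follows in one line. The paper's proof is exactly this observation.

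Your $M\times[0,1]$ detour does eventually recover the same vanishing --- your computation $R_{\tilde\Theta}=\dot\Theta_t\wedge\d t$ giving $\CW_f(\tilde\Theta)=0$ is literally the same zero in a different guise --- but it obscures how trivial the statement is once you remember the definition of $A_f$.
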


\begin{proof}
By Proposition \ref{prop: Chern-Simons loop exact}, we have
\[\cs_f(\Theta_0, \Theta_1) = [\CS_f((\Theta_t)_{t\in [0,1]})]~.\]
From the definition, we see that $\CS_f((\Theta_t)_{t\in [0,1]}) = 0$ since $R_{\Theta_t} = 0$ for all~$t$.
\end{proof}

Assume $f$ is integral. Let $\rho_0$ and $\rho_1$ be two representations of $\pi_1(M)$ into $G$ such that the associated principal bundles $P_{\rho_0}$ and $P_{\rho_1}$ are isomorphic. Then there exist two flat connections $\Theta_0$ and $\Theta_1$ on the same principal bundle with respective holonomies $\rho_1$ and $\rho_2$. Moreover, $\Theta_0$ and $\Theta_1$ are uniquely defined up to a gauge transformation. By Corollary \ref{coro: Integral Chern-Simons gauge transformation} and Proposition~\ref{prop: Chern-Simons loop exact}, we thus get a well-defined cohomology class
\[\cs_f(\rho_0,\rho_1) \equaldef \cs_f(\Theta_0, \Theta_1) \mod \Z~.\]

Thus, while Chern--Weil classes can distinguish connected components in $\Hom(\pi_1(M),G)$ by distinguishing the homeomorphism type of the associated principal bundles, Chern--Simons classes can distinguish between connected components of representations whose associated principal bundles are isomorphic.

\subsubsection{Chern--Simons theory on $3$-manifolds}

Though this paragraph is not useful to the rest of the paper, we thought that including it would clarify how our presentation of Chern--Simons theory relates to its extensive developments in three-dimensional topology.\\

Any simple Lie algebra admits an invariant bilinear symmetric form 
\[\kappa: (u,v)\mapsto \Tr(\ad_u \ad_v)\]
called the \emph{Killing form}. Moreover there is a constant $a\neq 0$ such that $a\kappa$ is integral in the sense of Definition \ref{defi: Integral polynomial}.

Assume moreover that $G$ is connected and simply connected. Because $\pi_2(G) =\{0\}$, every principal $G$-bundle over a $3$-manifold is trivial and thus carries a flat connection with trivial holonomy, that we denote by $D$.

Now let $P$ be a principal $G$-bundle over a closed $3$-manifold $M$ with a connection $\Theta$. The form 
\[\CS_{a\kappa} (D,\Theta)\]
is trivially closed since it is a $3$-form. Moreover, its value modulo $\Z$ does not depend on the choice of the trivialization $D$ since $a\kappa$ is integral. Integrating over $M$, one can thus associate a Chern--Simons invariant
\[\mathfrak{cs}(\Theta) \equaldef \int_M  \CS_{a\kappa} (D,\Theta) \in \C/\Z\]
to any principal $G$-bundle with a connection $\Theta$. One can apply this for instance to the Levi--Civita connection of a Riemannian metric to define the Chern--Simons invariant of a closed Riemannian $3$-manifold.

\section{The algebra of invariant forms on symmetric spaces} \label{s: Invariant forms on symmetric spaces}

Recall that we have fixed a connected semisimple Lie group $G$ and an involutive isomorphism $\sigma$ of $G$, and denoted by $H$ the neutral component of the subgroup fixed by $\sigma$. Up to quotienting $G$, we can assume without loss of generality that $H$ does not contain a non-trivial normal subgroup of $G$, so that $G$ acts faithfully on the symmetric space $G/H$. The involution $\sigma$ induces an involution of $G/H$ which fixes the base point $o \equaldef \1_G H$ and acts as $-\Id$ on $T_o G/H$.

In this section, we describe the algebra $\Omega^\bullet_\inv(G/H,\C)$ of $G$-invariant forms on $G/H$, relying mostly on the work of Cartan \cite{Cartan50} and Borel \cite{Borel53}. Our goal is to prove that this algebra is generated by Chern--Weil forms and Chern--Simons forms associated to $G$-invariant connections on automorphic bundles over $G/H$.

\subsection{Invariant forms on symmetric spaces}

The study of invariant differential forms on symmetric spaces started with \'Elie Cartan and was carried on by his son Henri. A first elementary but useful result is the following:

\begin{prop}[E. Cartan]
Every $G$-invariant form on $G/H$ is closed.
\end{prop}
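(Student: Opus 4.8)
The plan is to use the standard averaging argument together with the symmetry $\sigma$ at the base point. First I would recall that since $G/H$ is a symmetric space, there is an involutive isometry $s_o$ of $G/H$ fixing the base point $o = \1_G H$ and acting as $-\Id$ on $T_o(G/H)$; indeed, $\sigma$ descends to such an involution $s_o$. More generally, at every point $p \in G/H$ there is a symmetry $s_p$ (conjugate $s_o$ by an element of $G$ taking $o$ to $p$), and these symmetries together with $G$ act transitively enough that a $G$-invariant form is in fact invariant under all the $s_p$.

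The key computation is purely local at $o$. Let $\omega$ be a $G$-invariant $k$-form. Then $s_o^*\omega$ is again $G$-invariant (because $s_o$ normalizes the $G$-action up to the automorphism $\sigma$, and $\omega$ is $G$-invariant for all of $G$, in particular for $H$ and for the whole group after conjugating), and at the base point $s_o$ acts as $-\Id$ on $T_o(G/H)$, hence $(s_o^*\omega)_o = (-1)^k \omega_o$. By $G$-invariance, two $G$-invariant forms agreeing at $o$ agree everywhere, so $s_o^*\omega = (-1)^k \omega$ globally, and similarly $s_p^*\omega = (-1)^k\omega$ for every $p$. Now $\d\omega$ is also $G$-invariant, so the same reasoning gives $s_p^*(\d\omega) = (-1)^{k+1}\d\omega$. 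On the other hand $s_p^*(\d\omega) = \d(s_p^*\omega) = (-1)^k\,\d\omega$. Comparing, $(-1)^{k+1}\d\omega = (-1)^k\d\omega$, which forces $\d\omega = 0$.

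The one point requiring care — and the main (mild) obstacle — is justifying that $s_o^*\omega$ is $G$-invariant, or more directly that $s_o^*\omega = (-1)^k\omega$ everywhere once it holds at $o$. The cleanest route is: a $G$-invariant form is determined by its value at $o$ (evaluation at $o$ is injective on $\Omega^\bullet_\inv(G/H)$ since $G$ acts transitively), so it suffices to check $(s_o^*\omega)_o = (-1)^k\omega_o$ and that $s_o^*\omega$ is genuinely $G$-invariant. For the latter, note $s_o \circ g = \sigma(g) \circ s_o$ as maps on $G/H$ for $g \in G$, whence $g^*(s_o^*\omega) = s_o^*(\sigma(g)^*\omega) = s_o^*\omega$ since $\omega$ is invariant under all of $G$ and $\sigma(g) \in G$. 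This is a short verification once set up, and with it in hand the degree/sign comparison above closes the proof immediately.
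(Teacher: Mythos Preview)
The paper does not actually supply a proof of this proposition; it merely states it as a classical result of E.~Cartan. So there is no ``paper's own proof'' to compare against.

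That said, your argument is correct and is precisely the classical proof attributed to Cartan. The key identity $s_o \circ g = \sigma(g) \circ s_o$ on $G/H$ is right (since $s_o(g'H) = \sigma(g')H$), and from it your verification that $s_o^*\omega$ remains $G$-invariant goes through. The degree--sign comparison $(-1)^{k+1}\,\d\omega = (-1)^k\,\d\omega$ then forces $\d\omega = 0$. Nothing is missing.

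One cosmetic remark: the detour through the symmetries $s_p$ at arbitrary points is unnecessary. Once you have established $s_o^*\omega = (-1)^k\omega$ globally (as you do, via $G$-invariance of $s_o^*\omega$ and transitivity), you can immediately apply the same identity to the $G$-invariant $(k+1)$-form $\d\omega$ and conclude; the point $o$ alone suffices throughout.
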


In other words, the complex of $G$-invariant forms $\Omega^\bullet_\inv(G/H,\C)$ has trivial differential and is thus isomorphic to its cohomology. A series of isomorphisms identifies it canonically with the cohomology of the \emph{dual compact symmetric space}.

Define first the complexification $\g_\C$, $\h_C$, $G_\C$ and $H_\C$ of $\g$, $\h$, $G$ and $H$ respectively in the following way:
\begin{itemize}
\item $\g_\C$ is the complex Lie algebra $\g\otimes_\R \C$,
\item $\h_\C$ is the complex Lie subalgebra $\h \otimes_\R \C$,
\item $G_\C$ is the neutral component of $\Aut(\g_\C)$,
\item $H_\C$ is the neutral component of the subgroup preserving $\h_\C$.
\end{itemize}

Note that, because $\g_\C$ is a semisimple Lie algebra, it is the Lie algebra of the complex group $G_\C$. Since $\h_\C$ is the subalgebra fixed by an involution, it is the Lie algebra of its stabilizer.\footnote{While the complexification is perfectly well-defined at the level of Lie algebras, there is something arbitrary in our definition of the complexification of Lie group. For instance, the complexification of $\SL(k,\R)$ is the adjoint group $\PSL(k,\C)$. We did not settle with a more algebraic definition because we want to consider connected simple Lie groups such as $\SO_\circ (p,1)$, which are not necessarily algebraic.}

Recall that a Cartan involution of an algebraic group $G_\C$ is an involutive isomorphism whose set of fixed points is compact. By a result of E. Cartan (see also \cite{Richardson68}), we can always choose a Cartan involution $\theta$ of $G_\C$ that commutes with $\sigma$. Moreover, it is unique up to conjugation by $H_\C$. Define $G_U$ to be subgroup of $G_\C$ fixed by $\theta$, $H_U = G_U\cap \H_C$, and $\g_U$ and $\h_U$ their respective Lie algebras. The groups $G_U$ and $H_U$ are compact real forms of $G_\C$ and $H_\C$. In particular, they are connected (since the complex groups are connected and retract to their maximal compact subgroup). We call $G_U/H_U$ the \emph{dual compact symmetric space}.

given a linear representation $V$ of a group $G$ over a field $k$ and $l$ an extension of $k$, denote by $\Lambda^\bullet_l(V^\vee)^G$ the subalgebra of $G$-invariant forms in the algebra of alternate $k$-multilinear forms.

\begin{prop} \label{prop: Comparison cohomology compact dual}
There are canonical isomorphisms
\[\begin{array}{ccccc}
\Omega^\bullet_\inv(G/H,\C) &\simeq & \Lambda^\bullet_\C((\g/\h)^\vee)^H &\simeq &\Lambda^\bullet_\C((\g_C/\h_C)^\vee)^{H_\C} \\
&\simeq & \Lambda^\bullet_\C((\g_U/\h_U)^\vee)^{H_U} &\simeq & \Omega^\bullet_\inv(G_U/H_U,\C) \\ & & &\simeq & \HH^\bullet(G_U/H_U,\C)~.
\end{array}
\]
\end{prop}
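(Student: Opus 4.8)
The plan is to establish the chain of isomorphisms in Proposition~\ref{prop: Comparison cohomology compact dual} one link at a time, since each link is an instance of a standard principle. First I would prove the leftmost isomorphism $\Omega^\bullet_\inv(G/H,\C)\simeq \Lambda^\bullet_\C((\g/\h)^\vee)^H$: evaluating a $G$-invariant form at the base point $o=\1_G H$ gives an element of $\Lambda^\bullet((T_oG/H)^\vee)\otimes\C$, and since $T_oG/H$ is $H$-equivariantly identified with $\g/\h$ via the isotropy representation (here one uses the $\Ad(H)$-invariant splitting $\g=\h\oplus\m$ coming from $\sigma$, with $\m\simeq\g/\h$), the form is $H$-invariant at $o$. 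Conversely, a single $H$-invariant alternating form on $\g/\h$ extends uniquely to a $G$-invariant form by translating it around via the $G$-action, $H$-invariance being exactly what is needed for this to be well-defined. This is the reduction of a homogeneous tensor field to its value at one point.

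Next, the two middle isomorphisms are purely algebraic: $\Lambda^\bullet_\C((\g/\h)^\vee)^H\simeq \Lambda^\bullet_\C((\g_\C/\h_\C)^\vee)^{H_\C}$ follows from the fact that $\Lambda^\bullet$ commutes with the base change $\R\rightsquigarrow\C$ together with the observation that an $\R$-linear alternating form on $\g/\h$ is $\Ad(H)$-invariant if and only if its $\C$-linear extension to $\g_\C/\h_\C$ is $\Ad(H_\C)$-invariant --- here one invokes that $H$ is Zariski-dense in $H_\C$ (equivalently, $\h$ generates $\h_\C$ as a complex Lie algebra and $H$ is connected), so invariance under $H$ is equivalent to annihilation by $\ad(\h)$, which is equivalent to annihilation by $\ad(\h_\C)$, which is equivalent to invariance under the connected group $H_\C$. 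The same argument applied to the compact real form gives $\Lambda^\bullet_\C((\g_\C/\h_\C)^\vee)^{H_\C}\simeq \Lambda^\bullet_\C((\g_U/\h_U)^\vee)^{H_U}$, using that $\g_U/\h_U$ is another real form of $\g_\C/\h_\C$ whose complexification is again $\g_\C/\h_\C$ and that $H_U$ is connected with complexification $H_\C$ (as noted in the text, the compact groups are connected). Then $\Lambda^\bullet_\C((\g_U/\h_U)^\vee)^{H_U}\simeq\Omega^\bullet_\inv(G_U/H_U,\C)$ is the same ``value at the base point'' isomorphism as the first link, now for the compact symmetric space $G_U/H_U$, whose isotropy representation at its base point is $\Ad(H_U)$ acting on $\g_U/\h_U$.

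Finally, the isomorphism $\Omega^\bullet_\inv(G_U/H_U,\C)\simeq \HH^\bullet(G_U/H_U,\C)$ is the classical fact that on a compact homogeneous space the inclusion of the subcomplex of invariant forms into the de Rham complex induces an isomorphism on cohomology: averaging over the compact group $G_U$ (with respect to Haar measure) is a chain homotopy inverse onto the invariant subcomplex, so the cohomology of $(\Omega^\bullet_\inv,\d)$ computes $\HH^\bullet(G_U/H_U,\C)$; combined with E.\ Cartan's proposition that $G_U$-invariant forms on the symmetric space $G_U/H_U$ are closed (the same proof as for $G/H$, or via the Cartan involution at the base point which reverses the sign of odd-degree forms), the differential on $\Omega^\bullet_\inv$ vanishes and the isomorphism follows. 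I expect the main obstacle to be the careful justification of the two middle (complexification) isomorphisms, where one must check that $H$-invariance, $H_\C$-invariance and $H_U$-invariance all coincide when transported through the common complexification $\g_\C/\h_\C$ --- this rests on the connectedness of $H$, $H_\C$, $H_U$ and on the density statements for the complexified groups, which is exactly the subtlety flagged in the footnote about the somewhat ad hoc definition of the complexification of a Lie group; everything else is the standard reduction of invariant objects to a base point plus Haar averaging.
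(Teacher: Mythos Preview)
Your proposal is correct and follows essentially the same route as the paper: evaluation at the base point for the first and fourth links, passage through the complexification via infinitesimal invariance and connectedness of $H_\C$ and $H_U$ for the middle links, and Haar averaging on the compact dual combined with Cartan's closedness result for the last link. Your discussion is slightly more explicit about the chain-homotopy nature of averaging and about the connectedness hypotheses, but the argument is the same.
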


\begin{proof}
The first isomorphism is well-known: every $G$-invariant form on $G/H$ restricts to an alternate form on $\g/\h \simeq T_o G/H$ which is invariant under the adjoint action of $H$, and conversely, every such alternate form extends in a unique way to a $G$-invariant form on $G/H$.

For the second isomorphism, note first that every $H$-invariant $\R$-multilinear form $\alpha$ on $\g/\h$ extends uniquely to an $H$-invariant $\C$-multilinear form $\alpha_\C$ on $\g_\C/\h_\C = \g/\h \otimes_\R \C$. Since $\alpha_\C$ is $H$-invariant, it satisfies $u\cdot \alpha_\C \equaldef \dt_{\vert t=0} \exp(-t\ad_u)^* \alpha_\C = 0$ for every $u\in \h$. But since $\g_\C$ is a complex Lie algebra and $\alpha_\C$ is $\C$-multilinear, the map 
\[u\mapsto u \cdot \alpha_\C\]
is $\C$-linear. Hence the relation $u\cdot \alpha_\C$ is satisfied for all $u \in \h_\C$. Finally, integrating this relation gives
\[h^*\alpha_\C = \alpha_\C\]
for all $h\in H_\C$ since $H_\C$ is connected by definition. Hence $\alpha \mapsto \alpha_\C$ gives the second isomorphism.

The third and fourth isomorphisms are respectively the second and first one applied to the compact symmetric space $G_U/H_U$, which has the same complexification. Finally, on $G_U/H_U$, every closed form is cohomologous to a unique invariant one obtained by averaging under the action of $G_U$. Hence the natural map
\[\Omega^\bullet_\inv(G_U/H_U,\C) \to \HH^\bullet(G_U/H_U,\C)\]
is an isomorphism.
\end{proof}

The space $G$ equipped with the right action of $H$ is a principal bundle over $G/H$ which we call the \emph{tautological $H$-bundle}. The involution $\sigma$ preserves the Killing form of $G$, and since $\h$ is the eigenspace of the involution for the eigenvalue $1$, the Killing form is non-degenerate in restriction to $\h$.
\begin{defi}
The \emph{standard principal connection} on the bundle $G\to G/H$ is the left $G$-invariant form $\Theta_{G/H}$ on $G$ with values in $\h$ whose value at $\1_G$ is the orthogonal projection to $\h$ (for the Killing form). We denote by $R_{G/H}$ its curvature form.
\end{defi}

For any $f \in \Sym^\bullet_\inv(\h^\vee)$, the Chern--Weil form 
\[\CW_f(\Theta_{G/H})\]
thus defines a $G$-invariant form on $G/H$. We set
\[\Omega^\bullet_{\even}(G/H,\C) = \{\CW_f(\Theta_{G/H}), f\in  \Sym^\bullet_\inv(\h^\vee)\}~.\]

Looking at the restriction of the Chern--Weil forms to $\g/\h$ and their behaviour under complexification, one proves the following:

\begin{prop}
The sequence of isomorphisms given in Proposition \ref{prop: Comparison cohomology compact dual} restrict to an isomorphism
\[\Omega^\bullet_{\even}(G/H,\C) \simeq \HH^\bullet_{\even}(G_U/H_U,\C)~,\]
where $\HH^\bullet_{\even}(G_U/H_U,\C)$ is the algebra of characteristic classes of the tautological principal bundle of $G_U/H_U$.
\end{prop}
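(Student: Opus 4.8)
The plan is to run the chain of isomorphisms of Proposition~\ref{prop: Comparison cohomology compact dual} and track what it does to a Chern--Weil form $\CW_f(\Theta_{G/H})$, $f\in\Sym^\bullet_\inv(\h^\vee)$. The first ingredient is the value at the base point of the curvature $R_{G/H}$. Writing $\g=\h\oplus\mathfrak m$ for the decomposition into the $(+1)$- and $(-1)$-eigenspaces of $\sigma$ (so that $\mathfrak m\simeq\g/\h\simeq T_oG/H$, $[\h,\mathfrak m]\subset\mathfrak m$ and $[\mathfrak m,\mathfrak m]\subset\h$), and decomposing the Maurer--Cartan form of $G$ as $\theta=\theta_\h+\theta_{\mathfrak m}$ along this splitting, one has $\Theta_{G/H}=\theta_\h$, and the Maurer--Cartan equation $\d\theta+\frac12[\theta,\theta]=0$ together with these bracket relations gives $R_{G/H}=\d\theta_\h+\frac12[\theta_\h,\theta_\h]=-\frac12[\theta_{\mathfrak m},\theta_{\mathfrak m}]$. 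Seen as a $2$-form on $G/H$ with values in $G\times_H\h$, $R_{G/H}$ thus has value at $o$ the $\h$-valued alternating form $(X,Y)\mapsto-[X,Y]$ on $\mathfrak m$. Consequently the restriction of $\CW_f(\Theta_{G/H})$ to $T_oG/H=\mathfrak m$ is $\mathcal P_f\equaldef f(R_{\mathfrak m})$, where $R_{\mathfrak m}$ is the $\h$-valued $2$-form $(X,Y)\mapsto-[X,Y]$ on $\mathfrak m$ --- a universal expression built from nothing but the bracket $\mathfrak m\times\mathfrak m\to\h$ and the form $f$.

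The key point is then that $\mathcal P_f$ is natural under the three operations that appear in Proposition~\ref{prop: Comparison cohomology compact dual}: taking the value of an invariant form at the base point, $\C$-multilinear extension, and restriction to the compact form $\g_U/\h_U\subset\g_\C/\h_\C$. Indeed, the bracket $\mathfrak m\times\mathfrak m\to\h$ complexifies to $\g_\C/\h_\C\times\g_\C/\h_\C\to\h_\C$ and restricts to $\g_U/\h_U\times\g_U/\h_U\to\h_U$, while $f$ extends uniquely to a $\C$-polynomial $f_\C$ on $\h_\C$ --- which is $\Ad(H_\C)$-invariant by the argument used for the second isomorphism of Proposition~\ref{prop: Comparison cohomology compact dual} --- and $f_\C$ restricts to $f_U\equaldef f_\C|_{\h_U}\in\Sym^\bullet_\inv(\h_U^\vee)$. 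By uniqueness of the $\C$-multilinear extension, the extension of $\mathcal P_f$ to $\g_\C/\h_\C$ is $\mathcal P_{f_\C}$, and its restriction to $\g_U/\h_U$ is $\mathcal P_{f_U}$. Since the standard connection on $G_U\to G_U/H_U$ is defined exactly as on $G\to G/H$ (with the same Killing form, whose $\sigma$-eigenspace splitting of $\g_U$ is $\h_U\oplus\mathfrak m_U$), the computation of the first paragraph applied to $G_U/H_U$ shows that the $G_U$-invariant form with value $\mathcal P_{f_U}$ at the base point is $\CW_{f_U}(\Theta_{G_U/H_U})$. Hence the chain of isomorphisms sends $\CW_f(\Theta_{G/H})$ to $\CW_{f_\C|_{\h_U}}(\Theta_{G_U/H_U})$.

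To conclude, observe that $f\mapsto f_\C|_{\h_U}$ is an isomorphism of graded algebras $\Sym^\bullet_\inv(\h^\vee)\xrightarrow{\ \sim\ }\Sym^\bullet_\inv(\h_U^\vee)$, both sides being identified by restriction with the algebra of $\Ad(H_\C)$-invariant $\C$-polynomials on $\h_\C$ (using that $H$ and $H_U$ are connected, so invariance under them amounts to $\h_\C$-invariance). Therefore the chain of Proposition~\ref{prop: Comparison cohomology compact dual} restricts to a graded algebra isomorphism from $\Omega^\bullet_\even(G/H,\C)=\{\CW_f(\Theta_{G/H}):f\in\Sym^\bullet_\inv(\h^\vee)\}$ onto $\{\CW_g(\Theta_{G_U/H_U}):g\in\Sym^\bullet_\inv(\h_U^\vee)\}$. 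It remains to see that this last algebra is $\HH^\bullet_\even(G_U/H_U,\C)$: since $H_U$ is compact, $\Phi_{EH_U}$ is an isomorphism, so the characteristic classes of $G_U\to G_U/H_U$ are exactly the Chern--Weil classes $\cw_g(G_U)$, $g\in\Sym^\bullet_\inv(\h_U^\vee)$; and each $\cw_g(G_U)$ is represented by the closed $G_U$-invariant form $\CW_g(\Theta_{G_U/H_U})$, which is its unique invariant representative and hence its image under the isomorphism $\HH^\bullet(G_U/H_U,\C)\simeq\Omega^\bullet_\inv(G_U/H_U,\C)$.

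The step I expect to be the main obstacle is the naturality claim of the second paragraph: one must check carefully that evaluating an invariant form at the base point, complexifying it, and restricting to the compact real form genuinely commute with the formation of Chern--Weil forms. In the end this reduces to the observation that $\mathcal P_f$ is intrinsically attached to the Lie bracket and to $f$, hence insensitive to the choice of real form; the curvature computation of the first paragraph is routine, but it is precisely what makes this observation effective.
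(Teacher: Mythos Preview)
Your argument is correct and is precisely the approach the paper has in mind: the text only offers the one-line sketch ``Looking at the restriction of the Chern--Weil forms to $\g/\h$ and their behaviour under complexification,'' and you have faithfully expanded that sketch into a complete proof, including the curvature computation, the naturality of $\mathcal P_f$ under complexification and restriction to the compact form, and the identification with characteristic classes via the Chern--Weil isomorphism for the compact group $H_U$.
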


Cartan and Borel gave the following description of the cohomology of compact symmetric spaces:
\begin{CiteThm}[Cartan, Borel, \cite{Borel53}] \label{thm: Cartan Borel}
Let $G_U/H_U$ be a compact symmetric space, with $G_U$ and $H_U$ connected. Then there exists a subalgebra 
\[\HH^\bullet_{\odd}(G_U/H_U,\C)\subset \HH^\bullet(G_U/H_U,\C)~,\]
generated by forms of odd degree, such that
\[\HH^\bullet(G_U/H_U,\C) = \HH^\bullet_\even(G_U/U,\C) \otimes \HH^\bullet_\odd(U/U,\C)~.\]
Moreover, the pull-back map
\[p^*: \HH^\bullet_\odd(U/U,\C)\to \HH^\bullet (U,\C)\]
induced by the projection $p:U\to G_U/H_U$ vanishes on $\HH^{>0}_\even(G_U/H_U,\C)$ and is injective on $\HH^\bullet_\odd(G_U/H_U,\C)$.
\end{CiteThm}

While the subalgebra $\HH\bullet_\odd(G_U/H_U,\C)$ is not uniquely determined, Theorem \ref{thm: Odd forms G/H} below will give us a canonical way to construct it and to define a corresponding subalgebra $\Omega^\bullet_\odd(G/H,\C) \subset \Omega^\bullet_\inv(G/H,\C)$.

\subsection{Bi-invariant forms on Lie groups}

A particular class of symmetric spaces will be of interest to us: the semisimple Lie group $G$ itself equipped with the action of $G\times G$ given by \[(g,h)\cdot x = gxh^{-1}~.\]  
The corresponding involution $\sigma$ of $G\times G$ is given by $\sigma(g,h) = (h,g)$, the stabilizer of the base point $\1_G$ is the diagonal subgroup $\Delta(G)$, and the central symmetry at $\1_G$ is the map $g\mapsto g^{-1}$. Since it will be convenient to distinguish between $G$ seen as a group and $G$ seen as a symmetric space, we will denote the latter by $X_G$.

The dual compact symmetric space of $X_G$ is simply the symmetric space $X_{G_U} = G_U\times G_U /\Delta(G_U)$. In particular, Proposition \ref{prop: Comparison cohomology compact dual} gives an isomorphism
\[\Omega^\bullet_\inv(X_G,\C) \simeq \HH^\bullet(G_U,\C)~.\]

The projection map $G_U \times G_U \to X_{G_U}$ induces in cohomology a \emph{coproduct} \[\delta: \HH^\bullet(G_U,\C) \to \HH^\bullet(G_U,\C) \otimes \HH^\bullet(G_U,\C)~,\]
giving $\HH^\bullet(G_U,\C)$ the structure of a \emph{Hopf algebra}. Following Borel, we define:

\begin{defi}
A class $x \in \HH^k(G_U,\C)$ is \emph{primitive} if 
\[\delta(x) = x\otimes \1 + (-1)^{k} \1\otimes x~.\]
\end{defi}

We denote by $\Prim(G_U)$ the vector space spanned by primitive classes in $\HH^\bullet(G_U,\C)$, and by $\Prim(X_G)$ the corresponding subspace of $\Omega^\bullet_\inv(X_G)$. The general structure theorem of Hopf gives the following:

\begin{CiteThm}[Hopf] \label{thm: Hopf}
The inclusion $\Prim(G_U)\hookrightarrow \HH^\bullet(G_U,\C)$ induces an isomorphism
\[\Lambda^\bullet \Prim(G_U) \simeq \HH^\bullet(G_U,\C)~,\]
where $\Lambda^\bullet$ denotes the exterior algebra.
\end{CiteThm}
Consequently, we also get that
\[\Omega^\bullet_\inv(X_G,\C) = \Lambda^\bullet\Prim(X_G)~.\\ \]

The space of primitive forms is further described by the work of Chevalley. Let $\mu\in \Omega^1(G,\g)$ denote \emph{Maurer--Cartan form} of $G$, i.e. the left-invariant $1$-form which is the identity at $\1_G$. Let $J\subset \Omega^\bullet_\inv(X_G,\C)$ denote the square of the ideal of forms of positive degree, i.e. the ideal generated by forms that can be factored as a product of two forms of lower degree. The following theorem is attributed by Borel to Cartan, Chevalley and Weil.

\begin{CiteThm}[Cartan--Chevalley--Weil]
Let $\tau: \Sym^{>0}_\inv(\g^\vee) \to \Omega^\bullet_\inv(G,\C)$ be the linear map sending a symmetric $k$-linear form $f$ to \[f(\alpha, [\alpha,\alpha], \ldots, [\alpha, \alpha])~.\]
Then the kernel of $\tau$ is the ideal $J$ and the image of $\tau$ is the set of primitive forms $\Prim(X_G)$.
\end{CiteThm}

\begin{rmk}
Chevalley also proved that $\Sym^\bullet_\inv(\g^\vee)$ is a polynomial algebra generated by $\rank_\C(G)$ elements. One deduces that
\[\dim \Sym^{>0}_\inv(\g^\vee)/J = \dim \Prim(X_G) = \rank_\C(G)~.\]
\end{rmk}

The formal similarity between the Cartan--Chevalley--Weil description of primitive forms and the definition of the Chern--Simons invariants is more than a coincidence. Let $p_1$ and $p_2$ denote the projections of $G\times G$ to the first and second factor and define
\[\Theta_L = p_2^* \mu~, \quad \Theta_R=  p_1^*\mu~,\]
where $\mu$ is the Maurer--Cartan form. Then $\Theta_R$ and $\Theta_L$ are two flat invariant connections on the tautological bundle $G\times G\to X_G$. The associated adjoint bundle is canonically identified to the tangent bundle to $G$, and the associated connections $\nabla^L$ and $\nabla^R$ respectively make the left-invariant and right-invariant vector fields parallel.\footnote{Note that both connections are indeed \emph{bi-invariant}, because the right-translate of a left-invariant vector field is another left-invariant vector field.}

\begin{theo} \label{thm: Primitive forms -> Chern-Simons}
For every $f \in \Sym^k_\inv(\g^\vee)$, we have
\[\CS_f(\Theta_L, \Theta_R) = \frac{(-1)^k k! ((k-1)!}{2^{k-1}(2k-1)!} \tau(f)~.\]
\end{theo}

Since the image of $\tau$ generates $\Omega^\bullet_\inv(X_G,\C)$, we deduce: 
\begin{coro}
The algebra of invariant forms on $X_G$ is generated by the Chern--Simons forms associated to the pair of connections $(\Theta_L,\Theta_R)$ on the tautological principal bundle.
\end{coro}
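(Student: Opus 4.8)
The plan is to chain together the three structural results that immediately precede the statement. First I would combine Theorem~\ref{thm: Primitive forms -> Chern-Simons} with the non-vanishing of its combinatorial prefactor: for every homogeneous $f \in \Sym^k_\inv(\g^\vee)$ of positive degree $k$, the identity $\CS_f(\Theta_L, \Theta_R) = \frac{(-1)^k k!(k-1)!}{2^{k-1}(2k-1)!}\,\tau(f)$ holds, and the scalar on the right never vanishes for $k \geq 1$. Consequently the linear span of the family $\{\CS_f(\Theta_L,\Theta_R) : f \in \Sym^{>0}_\inv(\g^\vee)\}$ coincides exactly with the image of the Cartan--Chevalley--Weil map $\tau$.

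Next I would invoke the Cartan--Chevalley--Weil theorem, which identifies the image of $\tau$ with the space $\Prim(X_G)$ of primitive forms. Together with the first step, this shows that the Chern--Simons forms $\CS_f(\Theta_L,\Theta_R)$ span $\Prim(X_G)$ as a vector space. Finally I would apply Hopf's theorem (Theorem~\ref{thm: Hopf}), in the form $\Omega^\bullet_\inv(X_G,\C) = \Lambda^\bullet\Prim(X_G)$: every invariant form on $X_G$ is a sum of wedge products of primitive forms, and the primitive forms are themselves spanned by the $\CS_f(\Theta_L,\Theta_R)$. Hence the whole algebra is generated, under wedge product, by the Chern--Simons forms attached to the pair $(\Theta_L,\Theta_R)$.

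There is no genuine obstacle here; the entire argument is a bookkeeping assembly of the preceding theorems. The only two points that warrant a word are the non-vanishing of the prefactor in Theorem~\ref{thm: Primitive forms -> Chern-Simons}, which is immediate by inspection since each of $k!$, $(k-1)!$, $2^{k-1}$ and $(2k-1)!$ is positive for $k\geq 1$, and the reading of the word \emph{generated}: it refers to generation of the exterior algebra by its space of primitives, which is precisely the content that Hopf's theorem supplies.
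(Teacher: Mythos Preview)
Your proposal is correct and follows exactly the paper's reasoning: the paper deduces the corollary in one line from the fact that the image of $\tau$ generates $\Omega^\bullet_\inv(X_G,\C)$, which in turn rests on Cartan--Chevalley--Weil and Hopf, together with Theorem~\ref{thm: Primitive forms -> Chern-Simons}. You have simply unpacked that sentence, including the observation that the prefactor is nonzero, which the paper leaves implicit.
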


\begin{proof}[Proof of Theorem \ref{thm: Primitive forms -> Chern-Simons}]
Set $\Theta_t= (1-t) \Theta_L + t \Theta_R$ and denote by $R_t = \d \Theta_t + \frac{1}{2}[\Theta_t, \Theta_t]$ its curvature, seen as a $2$-form on $G\times G$ with values in $\g$. Since $\Theta_L$ and $\Theta_R$ are flat, one computes that
\[R_t = \frac{-t(1-t)}{2} \left( [\Theta_R, \Theta_R] + [\Theta_L, \Theta_L]\right) + t (1-t)[\Theta_L,\Theta_R]~.\]

Define $\hat \CS_f\in \Omega^2(G\times G, \g)$ by
\begin{equation} \label{eq: Computing Chern--Simons}
\hat \CS_f = k \int_{t=0}^1 f(\dot \Theta_t, R_t, \ldots , R_t)~.
\end{equation}
By construction, $\hat \CS_f$ is the pull-back to $G\times G$ of the Chern--Weil form $\CS_f(\Theta_L, \Theta_R)$, hence$\CS_f(\Theta_L, \Theta_R)$ is the pull-back of $\hat \CS_f$ by any section of the bundle. Consider the section $s: g \mapsto (\1_G, g)$. Then $s^*\Theta_R = 0$ and $s^* \Theta_L= \mu$, hence
\[s^* \dot \Theta_t = - \mu\]
and 
\[s^*R_t = \frac{-t(1-t)}{2} [\mu, \mu]~.\]
We can thus compute:
\begin{eqnarray*}
\CS_f(\Theta_L, \Theta_R) &=& k \int_{t=0}^1 f(s^* \dot \Theta_t, s^* R_t, \ldots, R_t)\d t\\
&=& (-1)^k k \left (\int_{t=0}^1 \left(\frac{t(1-t)}{2}\right)^{k-1} \d t\right)\, f(\alpha, [\alpha,\alpha], \ldots, [\alpha, \alpha])\\
&=& \frac{(-1)^k k! ((k-1)!}{2^{k-1}(2k-1)!} \tau(f)~.
\end{eqnarray*}
 
\end{proof}

\subsection{The inclusion $\iota_\sigma$}

Let us recall the definition of the map $\iota_{G,H}$ given in the introduction:

\begin{defi}
The map $\iota_{G,H}: G/H \to X_G$ is defined by
\[\iota_{G,H}(gH) = g \sigma(g)^{-1}\]
for every class $gH$ in $G/H$. This does not depend on the representative $g$ since $h\sigma(h)^{-1} = \1_G$ for all $h\in H$.
\end{defi} 

The map $\iota_{G,H}$ is an inclusion of symmetric spaces. It is equivariant with respect to the representation $(\Id, \sigma): G\to G\times G$. In particular, every invariant form on $X_G$ can be pulled-back by $\iota_{G,H}$ to a $G$-invariant form on $G/H$. Moreover, this operation commutes with the comparison isomorphisms, i.e. we have the following diagram:
\[
\xymatrix{
\Omega^\bullet_\inv(X_G,\C) \ar[r]^{\iota_{G,H}^*} \ar[d]_\simeq & \Omega^\bullet_\inv(G/H,\C) \ar[d]^\simeq \\
\HH^\bullet(G_U,\C) \ar[r]^{\iota_{G_U,H_U}^*} & \HH^\bullet(G_U/H_U,\C)~.
}
\]

We can now state our refined version of Cartan--Borel's theorem for $G$-invariant forms on a (not necessarily compact) symmetric space:

\begin{theo} \label{thm: Odd forms G/H}
The space $\Omega^\bullet_\inv(G/H,\C)$ is isomorphic to 
\[\Omega^\bullet_\even(G/H,\C) \otimes \Omega^\bullet_\odd(G/H,\C)~,\]
where $\Omega^\bullet_\even(G/H,\C)$ is the algebra of Chern--Weil forms of the standard connection on the tautological principal $H$-bundle and 
\[\Omega^\bullet_\odd(G/H,\C) = \iota_{G,H}^* \Omega^\bullet_\inv(G,\C)~.\]
\end{theo}
Note that this is just a reformulation of Theorem \ref{t: Cohomology symmetric space} from the introduction in the general case where $G/H$ is not assumed compact. The two theorems are equivalent thanks to the comparison isomorphisms.\\

To prove the theorem, we will use the following cute and elementary lemma of linear algebra:
\begin{lem} \label{lem: Pair of projectors}
Let $E$ be a Euclidean vector space, $F_1$ and $F_2$ any two subspaces, and $\pi_1$ and $\pi_2$ the respective orthogonal projections to $F_1$ and $F_2$. Then 
\[F_1 = \im {\pi_1}_{\vert F_2} \oplus \ker {\pi_2}_{\vert F_1}~.\]
\end{lem}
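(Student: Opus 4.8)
The plan is to prove this by showing each of the two inclusions ``$\supseteq$'' separately and then checking directness of the sum, using a simple dimension count. Write $\pi_1', \pi_2'$ for the projections onto $F_1, F_2$ along their \emph{orthogonal} complements, so that $\pi_i = \pi_i'$; the point is to exploit that orthogonal projections are self-adjoint.

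First I would show $\im {\pi_1}_{\vert F_2} \subseteq F_1$ and $\ker {\pi_2}_{\vert F_1} \subseteq F_1$; both are immediate from the definitions (the first since $\pi_1$ has image $F_1$, the second since it is a subspace of $F_1$ by construction). Next I would check that these two subspaces of $F_1$ intersect trivially. Suppose $x = \pi_1(y)$ for some $y \in F_2$ and $\pi_2(x) = 0$. Using self-adjointness of $\pi_2$ and $x\in F_1$, together with $y\in F_2$ so $\pi_2(y)=y$, compute
\[\langle x, x\rangle = \langle x, \pi_1(y)\rangle~.\]
Since $x \in F_1$, $\pi_1(x) = x$ and self-adjointness of $\pi_1$ gives $\langle x, \pi_1(y)\rangle = \langle \pi_1(x), y\rangle = \langle x, y\rangle$. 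Now $x \in F_1 \subseteq E$ and $y \in F_2$; writing $\langle x, y\rangle = \langle \pi_2(x) + (x - \pi_2(x)), y\rangle$ and noting $x - \pi_2(x)$ is orthogonal to $F_2 \ni y$, we get $\langle x, y\rangle = \langle \pi_2(x), y\rangle = 0$. Hence $\langle x,x\rangle = 0$, so $x = 0$.

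Finally I would match dimensions: by rank--nullity applied to ${\pi_2}_{\vert F_1}: F_1 \to F_2$, we have $\dim \ker {\pi_2}_{\vert F_1} = \dim F_1 - \dim \im {\pi_2}_{\vert F_1}$. It remains to observe $\dim \im {\pi_1}_{\vert F_2} = \dim \im {\pi_2}_{\vert F_1}$: indeed ${\pi_1}_{\vert F_2}: F_2 \to F_1$ and ${\pi_2}_{\vert F_1}: F_1 \to F_2$ are adjoint to one another (again by self-adjointness of $\pi_1, \pi_2$ and the fact that $F_1, F_2$ are sub-inner-product-spaces), hence have the same rank. Therefore $\dim \im {\pi_1}_{\vert F_2} + \dim \ker {\pi_2}_{\vert F_1} = \dim F_1$, and combined with the trivial intersection this forces $F_1 = \im {\pi_1}_{\vert F_2} \oplus \ker {\pi_2}_{\vert F_1}$. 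The only mildly delicate point is the adjointness/rank-equality of the two restricted projections, which is where self-adjointness of orthogonal projections is essential; everything else is bookkeeping.
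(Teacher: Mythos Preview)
Your proof is correct and follows essentially the same strategy as the paper: show the two subspaces have trivial intersection (your computation is the same as the paper's, phrased via self-adjointness), then verify that their dimensions add up to $\dim F_1$. The only difference is in the dimension count: the paper computes directly with orthogonal complements and inclusion--exclusion, whereas your observation that ${\pi_1}_{\vert F_2}$ and ${\pi_2}_{\vert F_1}$ are mutual adjoints (hence have equal rank) is a slightly cleaner way to reach the same conclusion.
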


\begin{proof}
Let us first compute dimensions: we have
\begin{eqnarray*}
\dim\im {\pi_1}_{\vert F_2} &=& \dim F_2 - \dim \ker F_2 \cap F_1^\perp\\
&=& \dim F_2 - (\dim E - \dim (F_2^\perp + F_1)) + \dim F_1 \cap F_2^\perp\\
&=& \dim F_2 - \dim E + \dim F_2^\perp +\dim F_1 - \dim F_1 \cap F_2^\perp \\
&=& \dim F_2 - \dim \ker {\pi_2}_{\vert F_1} ~.
\end{eqnarray*}

It is thus enough to prove that $\im {\pi_1}_{\vert F_2} \cap \ker {\pi_2}_{\vert F_1} = \{0\}$.

Let $v$ be a vector in $\im {\pi_1}_{\vert F_2} \cap \ker {\pi_2}_{\vert F_1}$, and write $v= \pi_1(w)$, $w\in F_2$. Then
\begin{eqnarray*}
\langle v,v\rangle &=&\langle \pi_1(w), w\rangle \quad \textrm{since $w - \pi_1(w) \in F_1^\perp$}\\
&=& \langle \pi_2 \circ \pi_1(w), w\rangle \quad \textrm{since $w \in F_2$}\\
&=& \langle \pi_2(v), w \rangle = 0~.
\end{eqnarray*}
Hence $v= 0$ since $E$ is euclidean.
\end{proof}

\begin{proof}[Proof of Theorem \ref{thm: Odd forms G/H}]
Thanks to the comparison isomorphisms, it is enough to prove the theorem for compact groups, for which invariant forms are in bijection with cohomology classes. The fact that the cohomology of $G/H$ is the tensor product of an even part and an odd part is already given by Cartan and Borel, and the only thing we need to prove is that one can choose 
\[\HH^\bullet_\odd(G/H,\C) = \iota_{G,H}^*\HH^\bullet(G,\C)\]
in their theorem.

Let $I$ denote the ideal in $\HH^\bullet(G/H,\C)$ generated by $\HH^{>0}_\even (G/H,\C)$. Since Theorem \ref{thm: Cartan Borel} already tells us that $\HH^\bullet(G/H,\C)$ is a tensor product, what we need to prove is mainly that $\iota_{G,H}^* \HH^\bullet(G,\C)$ is a complement to $I$.

Fix an integer $k$, and let $E$ be the space of $H$-invariant alternate $k$-forms on $\g$, which we equip with an $H$-invariant scalar product. Consider the following subspaces of $E$:
\begin{itemize}
\item the subspace $F_1$ of forms that are pulled back from an $H$-invariant form on $\g/\h$,
\item the subspace $F_2$ of $G$-invariant forms.
\end{itemize}

Recall that $F_1$ identifies naturally with $\HH^k(G/H,\C)$, while $F_2$ identifies naturally with $\HH^k(G,\C)$. Through this identification, we see the pull-back maps $\pi^*: \HH^k(G/H,\C) \to \HH^k(G,\C)$ and $\iota_{G,H}^*: \HH^k(G,\C)\to \HH^k(G/H,\C)$ as maps between $F_1$ and $F_2$.

Let $\pi_1$ and $\pi_2$ denote respectively the orthogonal projections to $F_1$ and~$F_2$. We claim that we have the following identifications :
\begin{equation}\label{eq: cohomology map from G/H to G}
\pi^*= {\pi_2}_{\vert F_1}~,
\end{equation}
\begin{equation}\label{eq: cohomology map from G to G/H}
\iota_{G,H}^*= 2^k {\pi_1}_{\vert F_2}
\end{equation}

\begin{itemize}
\item {\it Proof of \eqref{eq: cohomology map from G/H to G}:} Let $\omega$ be a $G$-invariant $k$-form on $G/H$ and $\omega'$ its pull-back to $G$, which is a closed left-invariant $k$-form on $G$. It is cohomologous to a unique bi-invariant form $\overline{\omega'}$ on $G$, obtained by averaging $\omega'$ under the right action of $G$ (with respect to the Haar probability measure on $G$).

Since $\omega'$ is already left invariant, the restriction of $\overline{\omega'}$ to $T_{\1_G} G = \g$ is given by
\[\overline{\omega'}_{\1_G} = \int_G \Ad_g^* \omega'_{\1_G} \d g~.\]
Since averaging under the adjoint $G$-action is the orthogonal projection to $F_2$, we conclude that 
\[\pi^*[\omega] = \pi_2([\omega])\]
after the appropriate identifications.\\

\item {\it Proof of \eqref{eq: cohomology map from G to G/H}}:
Denote by $p$ the projection $\g \to \g/\h$ and $j : \g/\h \to \g$ be the section of $p$ with image $\h^\perp$. Since the derivative of the involution $\sigma$ is the identity on $\h$ and minus the identity on $\h^\perp$, we get that 
\[\d_o \iota_{G,H} = 2 j~,\]
where $o$ denotes the basepoint $\1_G H$.

Note that $j \circ p: \g \to \g$ is the orthogonal projection to $\h^\perp$, from which one easily deduces that $p^*j^* : E \to F_1$ is the orthogonal projection $\pi_1$ on $F_1$.

Let now $\omega$ be a biinvariant form on $G$ and $\omega'$ its pull-back to $G/H$. Then we have
\[p^* (\omega'_o) = 2^k p^*j^*\omega_{\1_G} = 2^k \pi_1(\omega_{\1_G})~.\]
which rewrites
\[\iota_{G,H}^*[\omega] = 2^k \pi_1([\omega])\]
after the apropriate identifications.
\end{itemize}

\noindent {\it Conclusion of the proof:} Applying Lemma \ref{lem: Pair of projectors}, we get that
\[\HH^\bullet(G/H,\C) = \ker \pi^* \oplus \iota_{G,H}^* \HH^\bullet(G,\C)~.\]
By Theorem \ref{thm: Cartan Borel}, the kernel of $\pi^*$ is the ideal $I$ generated by $\HH^{>0}_\even(G/H,\C)$. Hence $\HH^\bullet(G/H,\C)$ is generated by $\HH^\bullet_\even(G/H,\C)$ and $\iota_{G,H}^*(G,\C)$. 

The rest is a completely general argument: let $\HH^\bullet_\odd(G/H,\C)$ be a subalgebra such that 
\[\HH^\bullet(G/H,\C) = \HH^\bullet_\even(G/H,\C) \otimes \HH^\bullet_\odd(G/H,\C)~.\]
quotienting by the ideal $I$ we get that $\HH^\bullet_\odd(G/H,\C) \simeq \iota_{G,H}^* \HH^\bullet(G,\C)$. The inclusions of $\HH^\bullet_\even(G/H,\C)$ and $\iota_{G,H}^*\HH^\bullet(G,\C)$ induce a morphism
\[\HH^\bullet_\even(G/H,\C) \otimes \iota_{G,H}^*\HH^\bullet(G,\C) \to \HH^\bullet(G/H,\C)\simeq  \HH^\bullet_\even(G/H,\C) \otimes \HH^\bullet_\odd(G/H,\C)\]
which is surjective. Since both algebras have the same dimension over $\C$, it is an isomorphism.
\end{proof}

\section{Local rigidity of cohomological invariants}
 \label{s:Local rigidity}
 
This section will conclude the proof of Theorem \ref{t: Main Theorem} and its corollaries.
 
\subsection{Smooth families of principal bundles} \label{ss: Families of bundles}

Let us first introduce properly the notion of \emph{smooth family of bundles} and smooth family of sections used informally in the introduction. Here, the term ``bundle'' is meant to include principal bundles as well as their associated bundles.

We call $(E_t)_{t\in [0,1]}$ a \emph{smooth family of bundles} on a manifold $M$ if there is a smooth bundle $E$ over $M\times [0,1]$ such that $E_t$ is the pull-back of $E$ under the map $x\mapsto (x,t)$. If $(E_t)_{t\in [0,1]}$ is a smooth family of principal $G$-bundles on $M$, we call $(s_t)_{t\in [0,1]}$ a \emph{smooth family of sections} of $E_t$ is each $s_t$ is the pull-back under $x\mapsto (x,t)$ of a smooth section $s$ on $E$.

The following classical lemma can be attributed to Ehresmann:
\begin{lem} \label{lemma: family of principal bundles}
Let $M$ be a smooth manifold and $E$ a bundle over $M\times [0,1]$. Then $E$ is isomorphic to $p_1^* E_0$, where $p_1: M\times [0,1] \to M$ is the projection to the first factor.
\end{lem}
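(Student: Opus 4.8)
The plan is to prove Lemma~\ref{lemma: family of principal bundles} by the standard ``lifting a vector field and flowing'' argument, which works uniformly for principal bundles and their associated bundles once it is carried out at the level of the principal frame bundle. First I would reduce to the case of a principal $G$-bundle: an associated bundle $E = P\times_G V$ over $M\times[0,1]$ is determined by the principal bundle $P$, and an isomorphism $P \simeq p_1^*P_0$ of principal bundles induces an isomorphism of all associated bundles compatibly, so it suffices to trivialize $P$ in the $[0,1]$-direction. (For the statement as used in the paper one only needs bundles in this list, so no greater generality is required.)

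Next I would construct the trivialization by integrating a connection. Choose any principal connection $\Theta$ on $P\to M\times[0,1]$ (connections exist by a partition-of-unity argument). Let $\partial_t$ denote the canonical vector field on $M\times[0,1]$ in the $[0,1]$-direction, and let $\widetilde{\partial_t}$ be its $\Theta$-horizontal lift to $P$; this is a $G$-invariant vector field on $P$ projecting to $\partial_t$. I would then argue that $\widetilde{\partial_t}$ is complete: its flow $\phi_s$ covers the flow $x\mapsto$ translation by $s$ in the $[0,1]$ factor, which is complete on the compact-in-that-direction base $M\times[0,1]$, and completeness of the lifted flow follows because the $G$-action is proper and free and trajectories stay over a compact time-interval (alternatively, one covers $M$ by opens over which $P$ is trivial and checks the flow equation has solutions for all relevant $s$, then glues using $G$-equivariance). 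The time-$1$ flow $\phi_1$ then restricts to a $G$-equivariant diffeomorphism $P_0 = P|_{M\times\{0\}} \to P_1 = P|_{M\times\{1\}}$, and more to the point the full flow map $(p,s)\mapsto \phi_s(p)$ gives a $G$-equivariant diffeomorphism $P_0\times[0,1]\xrightarrow{\ \sim\ } P$ over $\mathrm{id}_{M\times[0,1]}$, i.e.\ exactly an isomorphism $P\simeq p_1^*P_0$.

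I would then conclude by pulling the argument back along associated-bundle functoriality as in the first paragraph, and note that the construction is natural enough that, if $P$ carries a family structure coming from a bundle over $M\times[0,1]$, the resulting isomorphism is the identity over $M\times\{0\}$, which is what is implicitly needed when one speaks of ``extending a section $s_0$ to a family $(s_t)$, unique up to homotopy'' in the introduction. The main obstacle, and the only point requiring a little care rather than bookkeeping, is the completeness of the horizontal lift $\widetilde{\partial_t}$: a priori an integral curve could escape the fiber in finite time, so one must use that the base flow is defined for all $s$ in the required range together with the properness of the $G$-action (or the local-triviality-plus-gluing argument) to guarantee the lifted flow is defined on all of $P\times\{s : 0\le s\le 1\}$. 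Everything else is routine differential topology.
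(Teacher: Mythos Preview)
The paper does not actually prove this lemma; it simply attributes it to Ehresmann as a classical result and moves on. Your argument---choosing a connection, horizontally lifting $\partial_t$, and integrating the resulting $G$-invariant vector field---is precisely the standard proof of Ehresmann's fibration theorem specialized to the base $M\times[0,1]$, so it is exactly what the paper is implicitly invoking. The proof is correct, and you have correctly identified the one nontrivial point (completeness of the lifted flow), which your local-triviality-plus-gluing argument handles.
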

More informally, any smooth family of vector bundles is trivial. As a consequence we have the following:
\begin{coro} \label{coro: family of sections}
Let $(E_t)_{t\in [0,1]}$ be a smooth family of bundles over $M$ and $s$ a smooth section of $E_0$. Then there exists a smooth family of sections $(s_t)_{t\in [0,1]}$ of $(E_t)_{t\in [0,1]}$ such that $s_0=s$. Moreover, if $(s'_t)_{t\in [0,1]}$ is another such family, then $s'_t$ is isotopic to $s_t$ for all $t$.
\end{coro}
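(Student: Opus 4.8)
The plan is to reduce everything to Lemma~\ref{lemma: family of principal bundles}. That lemma provides a bundle isomorphism $E \cong p_1^*E_0$ over $M\times[0,1]$; its restriction over $M\times\{0\}$ is some automorphism $\psi$ of $E_0$, and I would post-compose with $(p_1^*\psi)^{-1}$ to obtain an isomorphism $\Phi\colon E \cong p_1^*E_0$ whose restriction over $M\times\{0\}$ is the identity. Write $\Phi_t\colon E_t \cong E_0$ for the isomorphism it induces over $M\times\{t\}$, so that $\Phi_0 = \mathrm{id}_{E_0}$. Since the fibre of $p_1^*E_0$ over $(x,t)$ is canonically $(E_0)_x$, a smooth section of $p_1^*E_0$ is the same datum as a smooth family $(\xi_u)_{u\in[0,1]}$ of sections of $E_0$; composing with $\Phi$, smooth sections of $E$ --- equivalently, smooth families of sections of $(E_t)_{t\in[0,1]}$ --- correspond bijectively to such families, a smooth family $(s_t)$ corresponding to the family $(\xi_u)$ given by $\xi_u = \Phi_u\circ s_u$, and $s_0 = \xi_0$ since $\Phi_0 = \mathrm{id}$.

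For existence I would take $\hat s \equaldef \Phi^{-1}(p_1^*s)$ and let $s_t$ be its restriction to $M\times\{t\}$; this is a smooth family of sections of $(E_t)$, and $s_0 = s$ by the normalization of $\Phi$.

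For the uniqueness statement, let $(s'_t)$ be another such family with $s'_0 = s$, corresponding under $\Phi$ to a smooth family $(\xi'_u)_{u\in[0,1]}$ with $\xi'_0 = s = \xi_0$. Fix $t\in[0,1]$. I would concatenate the path $u\mapsto \xi_{t(1-u)}$ ($u\in[0,1]$, running from $\xi_t$ to $s$) with the path $u\mapsto \xi'_{tu}$ ($u\in[0,1]$, running from $s$ to $\xi'_t$) to get a path of sections of $E_0$ from $\xi_t$ to $\xi'_t$; applying $\Phi_t^{-1}$, and using that a bundle isomorphism carries homotopies through sections to homotopies through sections, this becomes a homotopy through sections of $E_t$ from $s_t = \Phi_t^{-1}\xi_t$ to $s'_t = \Phi_t^{-1}\xi'_t$. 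Hence $s_t$ and $s'_t$ are isotopic.

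The whole argument rests on Lemma~\ref{lemma: family of principal bundles}; beyond that, the only points needing care are the normalization of the trivialization over $M\times\{0\}$ (so that $s_0 = s$ holds on the nose rather than merely up to an automorphism of $E_0$) and the routine identification of smooth sections of $E$ with smooth $[0,1]$-families of sections of $E_0$. I do not expect a genuine obstacle here; the one minor subtlety is producing a \emph{smooth} rather than merely continuous isotopy, which I would arrange by reparametrizing the two halves of the concatenation so that their derivatives vanish at the junction point.
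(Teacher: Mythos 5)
Your argument is correct and is exactly the intended deduction: the paper states this corollary as an immediate consequence of Lemma~\ref{lemma: family of principal bundles}, i.e.\ trivialize $E\cong p_1^*E_0$ and translate sections of $E$ into $[0,1]$-families of sections of $E_0$, which is what you carry out (with the useful extra care of normalizing the trivialization over $M\times\{0\}$ and smoothing the concatenated homotopy). No gaps.
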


The main example we will be interested in is smooth families of flat bundles. Recall that, fixing a finite generating set $S$ of $\pi_1(M)$, one can see $\Hom(\pi_1(M),G)$ as an analytic subset of the analytic manifold $G^S$. We call a path $(\rho_t)_{t\in [0,1]}$ in $\Hom(\pi_1(M),G)$ smooth if it $t\mapsto \rho_t$ is a smooth map from $[0,1]$ to $G^S$ (equivalently, if $t\mapsto \rho_t(\gamma)$ is smooth in $G$ for all $t$). The space $\Hom(\pi_1(M),G)$ is locally connected by smooth arcs, in the sense that any two representations $\rho_0$ and $\rho_1$ in the same connected component are the endpoints of a smooth path in $\Hom(\pi_1(M),G)$.

Let $V$ be a manifold equipped with a $G$-action. If $(\rho_t)_{t\in [0,1]}$ is a smooth family of representations then the family
\[\left(M\times_{\rho_t} V\right)_{t\in [0,1]}\]
is a smooth family of bundles. In particular, $(P_{\rho_t})_{t\in [0,1]}$ is a smooth family of principal bundles. By Lemma \ref{lemma: family of principal bundles}, this family is trivial, and we deduce:
\begin{coro}
Let $(\rho_t)_{t\in [0,1]}$ be a smooth family of representations of $\pi_1(M)$ into $G$. Then there exists a principal $G$-bundle $P$ and a smooth family of flat connections $(\Theta_t)_{t\in [0,1]}$ on $P$ such that $\Theta_t$ has holonomy $\rho_t$.

Moreover, if $P'$ is another principal $G$-bundle and $(\Theta'_t)_{t\in [0,1]}$ another family of flat connections with monodromies $(\rho_t)_{t\in [0,1]}$, then there exists a smooth family $(\phi_t)_{t\in [0,1]}$ of bundle isomorphisms from $P$ to $P'$ such that $\Theta'_t = \phi_t^* \Theta_t$.
\end{coro}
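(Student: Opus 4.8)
\textbf{First assertion.} The plan is to apply Lemma~\ref{lemma: family of principal bundles} to the tautological model of the family. Set
\[\mathcal P \equaldef \tilde M\times[0,1]\times G\,/\,\langle (x,t,g)\sim(\gamma\cdot x,\,t,\,\rho_t(\gamma)g),\ \gamma\in\pi_1(M)\rangle~,\]
a principal $G$-bundle over $M\times[0,1]$ whose restriction to $M\times\{t\}$ is $P_{\rho_t}$, equipped with its canonical flat connection $\Theta_{\rho_t}$. By Lemma~\ref{lemma: family of principal bundles} there is an isomorphism of principal $G$-bundles $\Psi\colon\mathcal P\to p_1^*P$, where $P\equaldef\mathcal P_0=P_{\rho_0}$; restricting $\Psi$ over each slice $M\times\{t\}$ yields isomorphisms $\Psi_t\colon P_{\rho_t}\to P$. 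One then sets $\Theta_t\equaldef(\Psi_t^{-1})^*\Theta_{\rho_t}$. Each $\Theta_t$ is flat, being the pull-back of a flat connection, and has holonomy $\rho_t$ because a bundle isomorphism preserves holonomy up to conjugacy. Finally, $(\Theta_t)_{t\in[0,1]}$ is a \emph{smooth} family: pulling everything back to $\tilde M$, where $P$ carries its tautological trivialization and $\Psi_t$ is encoded by a smooth map $\psi\colon\tilde M\times[0,1]\to G$, one computes that $\Theta_t-\Theta_0$ is the descent of $-\Ad_{g^{-1}}(\d\psi_t\cdot\psi_t^{-1})$, which depends smoothly on $(x,t)$.

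\textbf{Second assertion.} Here the plan is to build the intertwining isomorphisms by parallel transport on the universal cover $q\colon\tilde M\to M$. Pulled back along $q$, the connections $q^*\Theta_t$ on $q^*P$ and $q^*\Theta'_t$ on $q^*P'$ are flat with trivial holonomy, since $\tilde M$ is simply connected. Fixing points $p_0\in(q^*P)_{\tilde x_0}$ and $p'_0\in(q^*P')_{\tilde x_0}$, one lets $\tilde\tau_t\colon q^*P\to\tilde M\times G$ and $\tilde\tau'_t\colon q^*P'\to\tilde M\times G$ be the trivializations obtained by parallel transport from $\tilde x_0$; they carry $q^*\Theta_t$ and $q^*\Theta'_t$ respectively to the trivial flat connection. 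Because parallel transport amounts to integrating a linear ODE whose coefficients depend smoothly on the connection, the maps $(t,p)\mapsto\tilde\tau_t(p)$ and $(t,p)\mapsto\tilde\tau'_t(p)$ are smooth. In the trivialization $\tilde\tau_t$ the deck action of $\gamma\in\pi_1(M)$ reads $(x,g)\mapsto(\gamma\cdot x,\,m_t(\gamma)g)$ with $m_t$ the holonomy of $\Theta_t$ based at $p_0$; as the holonomy is defined only up to conjugacy, after replacing $p_0$ by a suitable smoothly varying translate we may assume $m_t=\rho_t$, and likewise arrange that $\tilde\tau'_t$ gives the deck action $(x,g)\mapsto(\gamma\cdot x,\rho_t(\gamma)g)$. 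Then $\tilde\phi_t\equaldef(\tilde\tau'_t)^{-1}\circ\tilde\tau_t$ is the identity of $\tilde M\times G$ in these trivializations; since the two deck actions now coincide, $\tilde\phi_t$ is $\pi_1(M)$-equivariant and descends to a bundle isomorphism $\phi_t\colon P\to P'$. It satisfies $\phi_t^*\Theta'_t=\Theta_t$ because $\tilde\tau_t$ and $\tilde\tau'_t$ straighten the respective connections while $\tilde\phi_t$ is the identity, and the family $(\phi_t)_{t\in[0,1]}$ is smooth because $(\tilde\tau_t)$ and $(\tilde\tau'_t)$ are.

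I would leave to the reader the routine verifications: that $\mathcal P$ is a locally trivial principal bundle, that pull-backs of flat connections are flat, and the gauge/Maurer--Cartan computations alluded to above. The one delicate point, which I expect to be the main obstacle, is the smoothness in $t$ of the intertwiners $\phi_t$ in the second assertion: for each fixed $t$ the isomorphisms $P\to P'$ sending $\Theta'_t$ to $\Theta_t$ form a torsor under the flat gauge group, isomorphic to the centralizer $Z_G(\rho_t(\pi_1(M)))$, whose dimension jumps along a generic family, so there is no canonical — let alone obviously smooth — choice. The parallel-transport construction circumvents this by producing $\phi_t$ from data (the developing trivializations $\tilde\tau_t$, $\tilde\tau'_t$) that depends manifestly smoothly on the connection, at the modest cost of normalizing the monodromies up to conjugacy.
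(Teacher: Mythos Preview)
The paper does not prove this corollary; it simply records it as a consequence of Lemma~\ref{lemma: family of principal bundles} applied to the tautological family $(P_{\rho_t})_{t\in[0,1]}$. Your argument for the \emph{first} assertion makes this explicit and is correct: trivializing $\mathcal P$ over $M\times[0,1]$ and pushing the canonical flat connections through the slice isomorphisms is exactly what is intended.

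For the \emph{second} assertion, however, your argument has a real gap at the step ``after replacing $p_0$ by a suitable smoothly varying translate we may assume $m_t=\rho_t$''. This requires a smooth path $g_t\in G$ with $g_t^{-1}m_tg_t=\rho_t$; for each $t$ such an element exists, but the solution set is a coset of $Z_G(\rho_t)$, and you give no reason a smooth selection exists where the centralizer jumps. Your closing paragraph claims the parallel-transport construction ``circumvents'' this, but it does not: the trivializations $\tilde\tau_t,\tilde\tau'_t$ built from a \emph{fixed} basepoint are indeed smooth in $t$, yet they realize the deck actions as $m_t$ and $m'_t$, and matching those to a common representative is precisely the unresolved conjugacy problem. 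In fact, under the paper's convention that holonomy is defined only up to conjugacy, the uniqueness clause appears to be false as stated: take $G=\mathrm{SO}(3)$, $M=S^1$, let $\rho_t(1)$ be the rotation of angle $e^{-1/t^2}$ about a fixed axis, and let $\Theta'_t$ be the flat connection on the trivial bundle whose holonomy is the rotation of the same angle about the axis $v(t)=(\cos(1/t),\sin(1/t),0)$; the family $(\Theta'_t)$ is smooth (the infinite-order vanishing of $e^{-1/t^2}$ absorbs the oscillation of $v(t)$) and has monodromy conjugate to $\rho_t$, yet no continuous $\phi_t$ can intertwine it with $\Theta_t$, since $\phi_t$ restricted to a fiber would have to carry one rotation axis to the other. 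The paper neither proves nor subsequently uses this uniqueness clause; only the existence part is invoked later, in the proof of Corollary~\ref{coro: rigidity Chern--Simons}.
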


In particular, the topology of the bundle $P_\rho$ is constant when $\rho$ varies in a connected component of $\Hom(\pi_1(M),G)$.

\subsection{Proof of Theorem \ref{t: Main Theorem}}

We now have a sufficient understanding of invariant forms on symmetric spaces to prove Theorem \ref{t: Main Theorem}. Clearly, it is enough to prove it for $\omega$ in a subset of $\Omega^\bullet_\inv(G/H,\C)$ generating the whole algebra. We will thus prove it separately for $\omega
\in \Omega^\bullet_\even(G/H,\C)$ (using Chern--Weil theory) and for $\omega \in \iota_{G,H}^*\Prim(X_G)$ (using Chern--Simons theory). While both arguments are probably well-known it is worth including them here for completeness.

\subsubsection{The even case}
Let us first reinterpret the definition of $\Omega^\bullet_\even(G/H,\C)$ after pull-back by a section of a flat $G/H$ bundle. Fix $\omega \in\Omega^\bullet_\even(G/H,\C)$ and let $f \in \Sym^\bullet_\inv(\h^\vee)$ be such that
\[\omega = f(R_{G/H})~,\]
where $R_{G/H}$ is the curvature of the standard connection on the tautological principal $H$-bundle $G\to G/H$.

\begin{prop} \label{prop: Pull-back Chern-Weil}
For any manifold $M$, any representation $\rho: \pi_1(M) \to G$ and any smooth section $s$ of the flat bundle $M\times_\rho (G/H)$, we have
\[[s^*\omega] = \cw_f (P_\rho(s))~,\]
where $P_\rho(s)$ is the reduction to $H$ of the flat $G$-bundle $P_\rho$ given by $s$.
\end{prop}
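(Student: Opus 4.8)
The plan is to identify the reduction $P_\rho(s)$ explicitly as a descended pull-back bundle, to transport the standard connection $\Theta_{G/H}$ onto it, and then to recognize $s^*\omega$ as the associated Chern--Weil form; the conclusion then follows from naturality of Chern--Weil forms and connection-independence of the Chern--Weil class.

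First I would unwind the reduction $P_\rho(s)$. Write $\pi\colon G\to G/H$ for the tautological $H$-bundle and let $\tilde s\colon \tilde M\to G/H$ be the $\rho$-equivariant lift of $s$. The pull-back $\tilde s^*(G\to G/H) = \{(x,g)\in \tilde M\times G : \pi(g)=\tilde s(x)\}$ carries the right $H$-action on the $G$-factor together with the $\pi_1(M)$-action $\gamma\cdot(x,g) = (\gamma\cdot x, \rho(\gamma)g)$, which is well-defined precisely because $\tilde s$ is $\rho$-equivariant and which commutes with the $H$-action. Chasing the construction of $P_\rho$ and of the section $s$ of $P_\rho/H = M\times_\rho(G/H)$, one checks that the quotient $\tilde s^*(G)/\pi_1(M)$ is exactly the preimage of $s$ in $P_\rho$, i.e.\ the reduction $P_\rho(s)$. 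Equivalently, $(x,g)\mapsto g$ is an $H$-equivariant bundle map $\tilde s^*(G)\to G$ covering $\tilde s$ and intertwining the $\pi_1(M)$-action with left translation by $\rho(\pi_1(M))$ on $G$.

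Next I would build the connection. Since $\Theta_{G/H}$ is left $G$-invariant, its pull-back to $\tilde s^*(G)$ is a principal $H$-connection invariant under the $\pi_1(M)$-action --- this is exactly invariance of $\Theta_{G/H}$ under left translation by $\rho(\pi_1(M))$, transported through the bundle map above --- hence it descends to a principal $H$-connection $\Theta$ on $P_\rho(s)$. By naturality of the curvature under pull-back, $R_\Theta$ is the form on $M$ to which the $\pi_1(M)$-invariant form $\tilde s^* R_{G/H}$ descends, so
\[\CW_f(\Theta) = f(R_\Theta)\]
is the descent of $f(\tilde s^* R_{G/H}) = \tilde s^* f(R_{G/H}) = \tilde s^*\omega$, which by definition is $s^*\omega$. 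Thus $s^*\omega = \CW_f(\Theta)$ is the Chern--Weil form of a connection on $P_\rho(s)$; since $s^*\omega$ is closed and the Chern--Weil cohomology class does not depend on the connection, $[s^*\omega] = \cw_f(P_\rho(s))$, as claimed.

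I do not expect a serious obstacle: the argument reduces to naturality of Chern--Weil forms and connection-independence of the Chern--Weil class, both recalled above. The only point requiring care is the bookkeeping of the first step --- matching the abstract reduction $P_\rho(s)$ (the preimage of $s$ in $P_\rho$) with the concrete descent $\tilde s^*(G)/\pi_1(M)$ --- and observing that one uses only left $G$-invariance of $\Theta_{G/H}$, namely invariance under $\rho(\pi_1(M))$, and not bi-invariance.
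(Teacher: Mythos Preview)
Your proposal is correct and follows essentially the same approach as the paper: identify $P_\rho(s)$ with the quotient $\pi_1(M)\backslash \tilde s^* P_{G/H}$, observe that the left $G$-invariance of $\Theta_{G/H}$ makes its pull-back descend to a connection on $P_\rho(s)$ with curvature $s^*R_{G/H}$, and conclude by naturality of Chern--Weil forms. The paper's proof is just a slightly terser version of yours.
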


\begin{proof}
Denote by $P_{G/H}$ the tautological principal $H$-bundle over $G/H$ and let $\tilde s$ be the lift of $s$ to $\tilde M$. Then $\pi_1(M)$ acts on $\tilde s^* P_{G/H}$ via $\rho$, and, by construction, the quotient $\pi_1(M) \backslash \tilde s^* P_{G/H}$ is the principal $H$-bundle $P_\rho(s)$.

Now, the standard connection $\Theta_{G/H}$ on $P_{G/H}$ pulls back to a $\pi_1(M)$-invariant connection on $\tilde s^* P_{G/H}$ which factors to a connection $s^* \Theta_{G/H}$ on $P_\rho(s)$, with curvature $s^* R_{G/H}$. By naturality of the pull-back, we have
\begin{eqnarray*}
\cw_f (P_\rho(s)) &=& [\CW_f(s^* \Theta_{G/H})]\\
&=& [f(s^*R_{G/H})]\\
&=& [s^*\omega]~.
\end{eqnarray*}
\end{proof}

\begin{coro} \label{coro: Rigidity Chern Weil}
For any manifold $M$, any smooth path $(\rho_t)_{t\in [0,1]}$ in $\Hom(\pi_1(M),G)$ and any smooth family of sections $s_t$ of $P_{\rho_t}$, we have
\[ [s_t^*\omega] = [s_0^*\omega]\]
for all $t$.
\end{coro}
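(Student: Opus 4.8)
The plan is to reduce Corollary~\ref{coro: Rigidity Chern Weil} to the topological invariance of Chern--Weil classes, using Proposition~\ref{prop: Pull-back Chern-Weil} together with the description of smooth families of flat bundles from Section~\ref{ss: Families of bundles}. First I would recall that by Proposition~\ref{prop: Pull-back Chern-Weil}, for each $t$ we have $[s_t^*\omega] = \cw_f(P_{\rho_t}(s_t))$, where $P_{\rho_t}(s_t)$ is the reduction to $H$ of the flat $G$-bundle $P_{\rho_t}$ determined by the section $s_t$. So it suffices to show that the principal $H$-bundles $P_{\rho_t}(s_t)$ are all isomorphic, since Chern--Weil classes depend only on the isomorphism type of the bundle.

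Next I would invoke the results of Section~\ref{ss: Families of bundles}: since $(\rho_t)_{t\in[0,1]}$ is a smooth family of representations, $(P_{\rho_t})_{t\in[0,1]}$ and $(M\times_{\rho_t}(G/H))_{t\in[0,1]}$ are smooth families of bundles, and $(s_t)_{t\in[0,1]}$ is a smooth family of sections of the latter (up to isotopy, by Corollary~\ref{coro: family of sections}, which does not affect cohomology classes anyway by homotopy invariance). A smooth family of sections $s$ of the associated $G/H$-bundle $E$ over $M\times[0,1]$ is the same as a reduction of structure group of the principal bundle $P$ over $M\times[0,1]$ underlying the smooth family $(P_{\rho_t})$; restricting this reduction to the slice $M\times\{t\}$ gives precisely $P_{\rho_t}(s_t)$. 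Thus the $(P_{\rho_t}(s_t))_{t\in[0,1]}$ form a smooth family of principal $H$-bundles over $M$, i.e. the restrictions to the slices of a single principal $H$-bundle $Q$ over $M\times[0,1]$.

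Finally I would apply Lemma~\ref{lemma: family of principal bundles} to $Q$: it gives $Q \simeq p_1^* Q_0$, so $Q_t = P_{\rho_t}(s_t)$ is isomorphic to $Q_0 = P_{\rho_0}(s_0)$ for every $t$. (Alternatively, one invokes the homotopy invariance of pull-back of the universal bundle: the classifying maps $M\to BH$ of $Q_t$ are the restrictions of a single map $M\times[0,1]\to BH$, hence homotopic, so the bundles are isomorphic.) Consequently $\cw_f(P_{\rho_t}(s_t)) = \cw_f(P_{\rho_0}(s_0))$ for all $t$, and combining with Proposition~\ref{prop: Pull-back Chern-Weil} yields $[s_t^*\omega] = [s_0^*\omega]$, as desired.

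The argument is essentially a bookkeeping exercise once the right objects are in place; the only point requiring a little care is the identification of a smooth family of sections of the associated $G/H$-bundle with a reduction of structure group to $H$ over $M\times[0,1]$, so that the reduced bundles genuinely fit together into one bundle over $M\times[0,1]$ rather than merely being isomorphic slice by slice. I expect this to be the main (minor) obstacle; everything else follows from Ehresmann's lemma and naturality of Chern--Weil forms.
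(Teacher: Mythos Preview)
Your proof is correct and follows essentially the same route as the paper: identify $[s_t^*\omega]$ with $\cw_f(P_{\rho_t}(s_t))$ via Proposition~\ref{prop: Pull-back Chern-Weil}, observe that the reductions $P_{\rho_t}(s_t)$ form a smooth family of principal $H$-bundles, and conclude by Lemma~\ref{lemma: family of principal bundles} that they are all isomorphic and hence have the same characteristic classes. You are in fact slightly more careful than the paper in spelling out why the $P_{\rho_t}(s_t)$ fit into a single bundle over $M\times[0,1]$.
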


\begin{proof}
The principal $H$-bundles $P_{\rho_t}(s_t)$ form a smooth family of bundles over $M$. By Lemma \ref{lemma: family of principal bundles}, they are all isomorphic and, in particular, they have the same characteristic classes. Hence
\[ [s_t^*\omega] = \cw_f(P_{\rho_t}(s_t)) = \cw_f(P_{\rho_0}(s_0)) = [s_0^* \omega]\]
for all $t\in [0,1]$.
\end{proof}

\subsubsection{The odd case}
Now, let $\rho_1$, $\rho_2$ be two representations of $\pi_1(M)$ into $G$ and $s$ a section of the flat bundle $M\times_{(\rho_1,\rho_2)} X_G$. Denote by $(P_{\rho_i}, \Theta_{\rho_i})$ the flat principal bundles associated to $\rho_i$, $i=1,2$. Then $s$ induces an isomorphism of principal bundles 
\[\function{\phi_s}{P_{\rho_2}}{P_{\rho_1}}{(x,g)}{(x, \tilde s (x) g)}~.\]

Choose $\omega\in \Prim(X_G)$ and let $f\in \Sym^k_\inv(\g^\vee)$ be such that $\omega = \frac{(-1)^k k! ((k-1)!}{2^{k-1}(2k-1)!} \tau(f)$.

\begin{prop} \label{prop: Pull-back Chern--Simons}
For any manifold $M$, any pair of representations $(\rho_1, \rho_2)\in \Hom(\pi_1(M),G)^2$ and any smooth section $s$ of the flat $X_G$-bundle associated to $(\rho_1,\rho_2)$, we have
\[ [s^*\omega] = \cs_f(\Theta_{\rho_1}, \phi_s^*\Theta_{\rho_2})~.\]
In particular, for any representation $\rho \in \Hom(\pi_1(M),G)$ and any smooth section $s$ of the corresponding flat $G/H$-bundle, we have
\[ [s^* (\iota_{G,H}^* \omega)] = \cs_f(\Theta_\rho, \phi_{\iota_{G,H}\circ s}^* \Theta_{\sigma \circ \rho})~.\]
\end{prop}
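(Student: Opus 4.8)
The plan is to unwind the definitions on both sides and recognize that $s^*\omega$ is literally a Chern--Simons form on $M$. Recall that $X_G = G$ carries the bi-invariant form $\omega$, which by Theorem \ref{thm: Primitive forms -> Chern-Simons} equals $\CS_f(\Theta_L,\Theta_R)$ up to the constant that was absorbed into the definition of $f$; so on the tautological bundle $G\times G \to X_G$ we have $\CS_f(\Theta_L,\Theta_R)=\omega$. The flat $X_G$-bundle associated to $(\rho_1,\rho_2)$ is $M\times_{(\rho_1,\rho_2)}X_G = P_{(\rho_1,\rho_2)}\times_{G\times G} X_G$, where $P_{(\rho_1,\rho_2)}=P_{\rho_1}\times_M P_{\rho_2}$ is the fibered product; this is canonically $\mathrm{Hom}(P_{\rho_2},P_{\rho_1})$, the bundle whose sections are bundle isomorphisms $P_{\rho_2}\to P_{\rho_1}$. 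Under this identification the section $s$ corresponds exactly to $\phi_s$, and the point of the proof is that pulling back $\Theta_L$ and $\Theta_R$ along $\phi_s$ gives $\Theta_{\rho_1}$ and $\phi_s^*\Theta_{\rho_2}$ respectively — precisely because $\Theta_L$ and $\Theta_R$ are the left- and right-parallelism flat connections on $G\times G$, so their holonomies on the two factors are the two given representations.

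Concretely, first I would pull everything back to the universal cover: $\tilde s\colon \tilde M \to X_G = G$ is $(\rho_1,\rho_2)$-equivariant, i.e.\ $\tilde s(\gamma\cdot x) = \rho_1(\gamma)\,\tilde s(x)\,\rho_2(\gamma)^{-1}$. Then $\tilde s^*\omega = \tilde s^*\CS_f(\Theta_L,\Theta_R)$, and by naturality of Chern--Simons forms under pull-back (stated in Section \ref{s: Chern--Weil and Chern--Simons}) this equals $\CS_f(\tilde s^*\Theta_L, \tilde s^*\Theta_R)$ on the pulled-back bundle $\tilde s^*(G\times G)$. Now $\tilde s^*(G\times G)$ is the trivial $G$-bundle $\tilde M\times G$, and I would compute that $\tilde s^*\Theta_R$ is the trivial (product) flat connection — using the section $g\mapsto(\1_G,g)$ as in the proof of Theorem \ref{thm: Primitive forms -> Chern-Simons}, so that $\Theta_R$ pulls back to $0$ — while $\tilde s^*\Theta_L = \tilde s^*\mu$ is the Maurer--Cartan form pulled back along $\tilde s$. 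The connection whose holonomy is $\rho$ on $\tilde M\times G$ is the product flat connection (after the identification coming from $\rho$-equivariant trivialization), and a direct check shows $\tilde s^*\Theta_L$ is the flat connection of holonomy $\rho_1$ written in the trivialization adapted to $\rho_2$; descending to $M$ this is exactly $\Theta_{\rho_1}$ transported by $\phi_s$, i.e.\ under the isomorphism $\phi_s\colon P_{\rho_2}\to P_{\rho_1}$ the two connections $\tilde s^*\Theta_L,\tilde s^*\Theta_R$ descend to $\phi_s^*\Theta_{\rho_1}$ and $\Theta_{\rho_2}$ on $P_{\rho_2}$ (equivalently $\Theta_{\rho_1}$ and $\phi_s^*\Theta_{\rho_2}$ on $P_{\rho_1}$). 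Since both are flat, $\CS_f(\Theta_{\rho_1},\phi_s^*\Theta_{\rho_2})$ is closed and its class is $\cs_f(\Theta_{\rho_1},\phi_s^*\Theta_{\rho_2})$; and by Proposition \ref{prop: Chern-Simons loop exact} the class does not depend on the path of connections chosen to represent it, so the identification $[\tilde s^*\omega] = \cs_f(\Theta_{\rho_1},\phi_s^*\Theta_{\rho_2})$ on $M$ follows.

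The second assertion is then immediate: $\iota_{G,H}\colon G/H\to X_G$ is $(\Id,\sigma)$-equivariant, so if $s$ is a section of $M\times_\rho(G/H)$ then $\iota_{G,H}\circ s$ is a section of $M\times_{(\rho,\sigma\circ\rho)}X_G$, and $s^*(\iota_{G,H}^*\omega) = (\iota_{G,H}\circ s)^*\omega$. Applying the first part with $(\rho_1,\rho_2)=(\rho,\sigma\circ\rho)$ gives $[s^*(\iota_{G,H}^*\omega)] = \cs_f(\Theta_\rho, \phi_{\iota_{G,H}\circ s}^*\Theta_{\sigma\circ\rho})$, as claimed.

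The main obstacle I expect is the bookkeeping in the middle step: correctly matching up the various trivializations so that ``the flat connection on $\tilde s^*(G\times G)$ pulled back from $(\Theta_L,\Theta_R)$ descends, via $\phi_s$, to the pair $(\Theta_{\rho_1},\phi_s^*\Theta_{\rho_2})$''. This is purely a diagram-chase with equivariant objects — no real analysis — but it is easy to get the left/right conventions or the direction of $\phi_s$ backwards. The cleanest way to organize it is to work entirely on $\tilde M$ with explicit $\rho_i$-equivariant trivializations of $P_{\rho_i}$, write $\tilde s$ as a $G$-valued equivariant function, and observe that $\phi_s$ is then literally ``multiply by $\tilde s$'' on the $G$-factor, under which left-parallelism becomes the $\rho_1$-flat structure and right-parallelism the $\rho_2$-flat structure; everything else is naturality of $\CS_f$ and the path-independence from Proposition \ref{prop: Chern-Simons loop exact}.
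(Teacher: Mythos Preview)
Your proof is correct and follows essentially the same approach as the paper: pull back $\omega = \CS_f(\Theta_L,\Theta_R)$ along $\tilde s$, use naturality of Chern--Simons forms, and identify the quotient of $\tilde s^*(G\times G)$ with its two pulled-back connections as $(P_{\rho_1},\Theta_{\rho_1},\phi_s^*\Theta_{\rho_2})$; the second assertion follows from the $(\Id,\sigma)$-equivariance of $\iota_{G,H}$ exactly as you say. The paper is terser on the bookkeeping you flag as the main obstacle---it simply asserts the identification ``by construction''---while you spell out the trivialization via the section $g\mapsto(\1_G,g)$ and the interpretation of $\phi_s$ as multiplication by $\tilde s$, which is a reasonable elaboration of the same argument.
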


\begin{proof}
Denote by $P_{X_G}$ the tautological principal $G$-bundle over $X_G$ and $\Theta_L$ and $\Theta_R$ the two invariant connections on $P_{X_G}$ given respectively by left and right parallelism. 

Let $\tilde s$ be the lift of $s$ to $\tilde M$. Then $\pi_1(M)$ acts on $\tilde s^* P_{X_G}$ via $(\rho_1,\rho_2)$ and, by construction, the quotient $\pi_1(M) \backslash \tilde s^* P_{X_G}$ equipped with the pulled-back connections $s^* \Theta_L$ and $s^*\Theta_R$ is isomorphic to the principal $G$-bundle $P_{\rho_1}$ equipped with the flat connections $\Theta_{\rho_1}$ and $\phi_s^*\Theta_{\rho_2}$. 

Now, by Theorem \ref{thm: Primitive forms -> Chern-Simons}, we have
\[\omega = \CS_f(\Theta_L, \Theta_R)~,\]
and by naturality under pull-back, we deduce that
\begin{eqnarray*}
[s^*\omega] &=& [s^*\CS_f(\Theta_L, \Theta_R)]\\
&=& [\CS_f(s^*\Theta_L, s^*\Theta_R)]\\
&=& \cs_f(\Theta_{\rho_1}, \phi_s^*\Theta_{\rho_2})~.
\end{eqnarray*}
The second part of the theorem is an immediate consequence of the first part, since $\iota_{G,H} \circ \tilde s$ is $(\rho, \sigma \circ \rho)$-equivariant.
\end{proof}

\begin{coro} \label{coro: rigidity Chern--Simons}
For any manifold $M$, any smooth path $(\rho_{1,t},\rho_{2,t})_{t\in [0,1]}$ in $\Hom(\pi_1(M), G\times G)$ and any smooth family of sections $s_t$ of $M \times_{(\rho_{1,t}, \rho_{2,t})} X_G$, we have
\[ [s_t^*\omega] = [s_0^*\omega]\]
for all $t$.

In particular, for any smooth path $(\rho_t)_{t\in [0,1]}$ and any smooth family of sections $s_t$ of $M\times_{\rho_t} (G/H)$, we have
\[ [s_t^* (\iota_{G,H}^*\omega)] = [s_0^* (\iota_{G,H}^*\omega)]\]
for all $t$.
\end{coro}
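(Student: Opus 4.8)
The plan is to express each class $[s_t^*\omega]$ as a Chern--Simons class of a pair of flat connections on a fixed principal bundle, and then to show this class is constant in $t$ by combining the vanishing of Chern--Simons forms along paths of flat connections (Proposition \ref{prop: Vanishing CS flat family}) with the cocycle relation of Proposition \ref{prop: Chern-Simons loop exact}.

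First I would invoke Proposition \ref{prop: Pull-back Chern--Simons}, which gives $[s_t^*\omega] = \cs_f(\Theta_{\rho_{1,t}}, \phi_{s_t}^*\Theta_{\rho_{2,t}})$, where both connections are flat connections on the principal $G$-bundle $P_{\rho_{1,t}}$. Since $(P_{\rho_{1,t}})_{t\in[0,1]}$ is a smooth family of principal $G$-bundles over $M$, Lemma \ref{lemma: family of principal bundles} provides a fixed principal $G$-bundle $P\to M$ together with a smooth family of isomorphisms $\psi_t: P_{\rho_{1,t}}\to P$. Because $(s_t)$ is a smooth family of sections, $\phi_{s_t}$ depends smoothly on $t$, and the flat connections $\Theta_{\rho_{i,t}}$ likewise form smooth families; hence
\[A_t := (\psi_t^{-1})^*\Theta_{\rho_{1,t}},\qquad B_t := (\psi_t^{-1})^*\big(\phi_{s_t}^*\Theta_{\rho_{2,t}}\big)\]
are two smooth families of flat connections on the single bundle $P$. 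By naturality of Chern--Simons forms under pull-back, applied to $\psi_t^{-1}$ which covers $\Id_M$, we get $\cs_f(A_t,B_t) = \cs_f(\Theta_{\rho_{1,t}}, \phi_{s_t}^*\Theta_{\rho_{2,t}}) = [s_t^*\omega]$ for all $t$.

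To finish, I would fix $t$ and apply the cocycle relation of Proposition \ref{prop: Chern-Simons loop exact} to the triples $(A_0,A_t,B_t)$ and then $(A_0,B_t,B_0)$, obtaining in $\HH^\bullet(M,\C)$
\[\cs_f(A_0,B_0) = \cs_f(A_0,A_t) + \cs_f(A_t,B_t) + \cs_f(B_t,B_0)~.\]
The families $(A_s)_{s\in[0,t]}$ and $(B_s)_{s\in[0,t]}$ are smooth paths of flat connections, so Proposition \ref{prop: Vanishing CS flat family} (applied to the reversed path for the last term) kills the two outer terms, leaving $[s_t^*\omega] = \cs_f(A_t,B_t) = \cs_f(A_0,B_0) = [s_0^*\omega]$. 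Equivalently, one may work directly with the concatenated path from $A_0$ to $A_t$ to $B_t$ to $B_0$ in $\Conn(P)$ and observe that the two flat sub-paths contribute nothing to the Chern--Simons form, since $R_{A_s} = R_{B_s} = 0$. For the ``in particular'' clause, $\iota_{G,H}\circ\tilde s_t$ is $(\rho_t,\sigma\circ\rho_t)$-equivariant and smooth in $t$, hence defines a smooth family of sections of $M\times_{(\rho_t,\sigma\circ\rho_t)}X_G$ over the smooth path $t\mapsto(\rho_t,\sigma\circ\rho_t)$ in $\Hom(\pi_1(M),G\times G)$; the first part together with the second formula of Proposition \ref{prop: Pull-back Chern--Simons} then gives the claim.

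The main obstacle --- in fact the only step that is not a formal manipulation --- is the smoothness bookkeeping in the second paragraph: after trivializing the family $(P_{\rho_{1,t}})$ via Ehresmann's lemma one must check that both $(A_t)$ and $(B_t)$ really are smooth families of flat connections on the \emph{same} bundle $P$, so that Proposition \ref{prop: Vanishing CS flat family}, which requires a smooth path of flat connections, and the naturality of $\CS_f$ under the isomorphisms $\psi_t$ genuinely apply. Everything else is an immediate consequence of the structural properties of Chern--Simons forms recalled in Section \ref{s: Chern--Weil and Chern--Simons}.
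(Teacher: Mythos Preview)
Your proof is correct and follows essentially the same route as the paper: trivialize the family $(P_{\rho_{1,t}})$ via Lemma~\ref{lemma: family of principal bundles} to obtain two smooth families of flat connections $A_t,B_t$ on a fixed bundle, express $[s_t^*\omega]=\cs_f(A_t,B_t)$ via Proposition~\ref{prop: Pull-back Chern--Simons}, and then use the cocycle relation of Proposition~\ref{prop: Chern-Simons loop exact} together with Proposition~\ref{prop: Vanishing CS flat family} to kill the variation. The paper organizes the cocycle step slightly differently (writing $\cs_f(A_t,B_t)-\cs_f(A_0,B_0)$ directly as a combination of $\cs_f$ along the two flat families), but the argument is the same.
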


\begin{proof}
By Lemma \ref{lemma: family of principal bundles}, there exists a principal $G$-bundle $P$ over $M$ with two smooth families of connections $(\Theta_{1,t})_{t\in [0,1]}$ and $(\Theta_{2,t})_{t\in [0,1]}$ such that 
\[(P,\Theta_{1,t},\Theta_{2,t}) \simeq (P_{\rho_{1,t}}, \Theta_{\rho_{1,t}},\phi_{s_t}^*\Theta_{\rho_{2,t}})\]
for all $t$.

By Proposition \ref{prop: Pull-back Chern--Simons}, we have
\begin{eqnarray*}
[s_t^*\omega] - [s_0^*\omega]&=& \cs_f(\Theta_{1,t},\Theta_{2,t})-\cs_f(\Theta_{1,0},\Theta_{2,0})\\
&=& \cs_f(\Theta_{2,t}, \Theta_{2,0}) - \cs_f(\Theta_{1,t}, \Theta_{1,0})~.
\end{eqnarray*}

Since $\Theta_{2,0}$ and $\Theta_{2,t}$ (resp. $\Theta_{1,0}$ and $\Theta_{1,t}$) are joined by a smooth path of flat connections, we conclude that 
\[[s_t^*\omega] = [s_0^*\omega]\]
by Proposition \ref{prop: Vanishing CS flat family}.
\end{proof}

\subsubsection{Conclusion of the proof}

We have proven Theorem \ref{t: Main Theorem} for $\omega \in \Omega^\bullet_\even(G/H,\C)$ (Corollary \ref{coro: Rigidity Chern Weil}) and for $\omega \in \iota_{G,H}^* \Prim(X_G)$ (Corollary \ref{coro: rigidity Chern--Simons}). By Theorem \ref{thm: Hopf}, $\Prim(X_G)$ generates $\Omega^\bullet_\inv (X_G)$; hence $\iota_{G,H}^* \Prim(X_G)$ generates $\Omega^\bullet_\odd(G/H,\C)$. Since $\Omega^\bullet_\inv(G/H,\C) = \Omega^\bullet_\even(G/H,\C)\otimes \Omega^\bullet_\odd(G/H,\C)$ by Theorem \ref{thm: Odd forms G/H}, we conclude that Theorem \ref{t: Main Theorem} holds for any $G$-invariant form.

\subsection{Continuous group cohomology} \label{ss: Continuous group cohomology Proof }

The \emph{continuous cohomology} $\HH^\bullet_c(G,\C)$ (with constant coefficients) of a Lie group $G$ is the cohomology of the complex $\mathcal C^\bullet_c(G,\C)^G$, where $\mathcal C^k_c(G,\C)^G$ is the space of $G$-invariant continuous functions on $G^{k+1}$, equipped with the usual differential:
\[\d f(g_0, \ldots , g_{k+1}) = \sum_{j=0}^{k+1} (-1)^j f(g_0, \ldots, \hat g_j, \ldots, g_{k+1})~.\]
When $G$ is connected, The \emph{Van Est isomorphism} identifies $\HH^\bullet_c(G,\C)$ with the algebra of $G$-invariant forms on the symmetric space $G/K$. To be more precise, recall that if $M$ is a manifold, there is a map $\pi$ from $M$ to a classifying space of $\pi_1(M)$, unique up to homotopy, which induces the identity on the fundamental groups.

\begin{CiteThm}[Van Est] \label{thm: Van Est}
There is an isomorphism of graded algebras
\[\VE: \HH^\bullet_c(G,\C) \to \Omega^\bullet_\inv(G/K,\C)\]
such that, for any smooth manifold $M$ and any representation $\rho: \pi_1(M) \to G$, the following diagram commutes:
\[
\xymatrix{
\HH^\bullet_c(G,\C) \ar[r]^{\VE} \ar[d]_{\rho^*} & \Omega^\bullet_\inv(G/K,\C) \ar[d]^{\rho^*} \\
\HH^\bullet(\Gamma,\C) \ar[r]^{\pi^*} & \HH^\bullet(M,\C)~.
}
\]
\end{CiteThm}
Here the map $\rho^*:  \HH^\bullet_c(G,\C) \to \HH^\bullet(\Gamma,\C)$ is the pull-back map on group cohomology while $\rho^*: \Omega^\bullet_\inv(G/K,\C) \to \HH^\bullet(M,\C)$ maps $\omega$ to $[s^*\omega]$ for any smooth section of $M\times_\rho (G/K)$.

\begin{proof}[Proof of Corollary \ref{c: Main Corollary Cohomology}]

Let $\Gamma$ be a finitely presented group. Assume first that $\Gamma$ is the fundamental group of an aspherical manifold $M$. Then the map $\pi^*: \HH^\bullet(\Gamma,\C) \to \HH^\bullet(M,\C)$ is an isomoprhism. Let $(\rho_t)_{t\in [0,1]}$ be a smooth family of representations of $\Gamma$ into $G$ and let $\alpha$ be a continuous cohomology class in $\HH^\bullet_c(G,\C)$. Then we have
\begin{eqnarray*}
\pi^* \rho_t^* \alpha &=& \rho_t^* \VE(\alpha) \quad \textrm{by Theorem \ref{thm: Van Est}}\\
&=& \rho_0^* \VE(\alpha) \quad \textrm{by Theorem \ref{t: Main Theorem}}\\
&=& \pi^* \rho_0^* \alpha~,
\end{eqnarray*}
and we conclude that $\rho_t^* \alpha= \rho_0^* \alpha$ for all $t$ since $\pi^*$ is injective.

In general, while $\Gamma$ need not be the fundamental group of an aspherical manifold, one can always find for any $n\in \N$ a manifold $M$ such that $\pi_1(M) = \Gamma$ and $\pi_k(M) = \{0\}$ for all $2\leq k\leq n$. Then $\pi^*: \HH^k(\Gamma,\C) \to \HH^k(M,\C)$ is an isomorphism for all $1\leq k\leq n$. Applying the above arguments thus gives that $\rho_t^* \alpha$ is constant in $t$ for all $\alpha \in \HH^{\leq n}(\Gamma,\C)$. Since $n$ is arbitrary, the conclusion follows.
\end{proof}

\subsection{Volume of locally homogeneous spaces}

In this section we prove Corollary \ref{c: Main Corollary Volume}, namely the volume rigidity of manifolds locally modelled on a reductive homogeneous space $G/H$.\\

The Killing form of $\g$ restricted to $\h^\perp \simeq \g/\h$ extends to a $G$-invariant metric on $G/H$. Moreover, $G/H$ can be oriented since $H$ is assumed to be connected. Hence the metric induces a $G$-invariant volume form $\vol_{G/H}$.

Let $(\dev_t,\rho_t)_{t\in [0,1]}$ be a smooth family of $G/H$-structures on a closed manifold $M$. By definition, the \emph{volume} of the $G/H$-structure $(\dev_t, \rho_t)$ is the number
\[\int_M \dev_t^*\vol_{G/H}~.\]
When $G/H$ is symmetric, Corollary \ref{c: Main Corollary Volume} is thus a direct application of Theorem \ref{t: Main Theorem}, taking $\tilde s_t = \dev_t$ and $\omega = \vol_{G/H}$.

We prove the general case where $G$ is semisimple and $H\subset G$ is reductive by lifting the $G/H$-structure to a $X_G$-structure on an $H/\Lambda$-bundle over $M$.

\begin{proof}[Proof of Corollary \ref{c: Main Corollary Volume}]

Fix a uniform lattice $\Lambda$ in $H$ (it exists by Borel--Harish-Chandra's theorem).

Let $M$ be a closed manifold and $(\dev_t, \rho_t)$ a smooth family of $G/H$-structures on $M$. Let $\tilde P\to \tilde M$ denote the pull-back of the tautological principal $H$-bundle over $G/H$ by $\dev_t$ and and $P\to M$ its quotient under $\pi_1(M)$ (the bundle $P$ does not depend on $t$ by Lemma \ref{lemma: family of principal bundles}).

By construction, the map $\dev_t$ lifts to a local diffeomorphism $\hat \dev_t: \tilde P \to X_G = G$, which satisfies
\[\hat \dev_t(\gamma \cdot p) = \rho_t(\gamma) \hat \dev_t(p)\]
for $\gamma \in \pi_1(M)$ and 
\[\hat \dev_t(p \cdot \lambda) = \hat \dev_t(p) \lambda\]
for all $\lambda \in \Lambda$.

Setting
\[\function{\hat \rho_t}{\pi_1(M)\times \Lambda}{G\times G}{(\gamma,\lambda)}{(\rho_t(\gamma), \lambda)~,}\]
we thus get that $(\hat \dev_t, \hat \rho_t)$ defines a $X_G$-structure on the closed manifold $P/ \Lambda$. Applying Theorem \ref{t: Main Theorem} to the symmetric space $X_G$, we deduce that
\[\int_{P/\Lambda} \hat \dev_t^*\vol_{X_G} = \int_{P/\Lambda} \hat \dev_0^*\vol_{X_G}\]
for all $t$.

On the other hand, after normalizing the volume forms of $X_G$, $H$ and $G/H$ compatibly, we have
\[\int_{P/\Lambda} \hat \dev_t^*\vol_{X_G} = \Vol(H/\Lambda) \int_M \dev_t^*\vol_{G/H}~.\]
We thus conclude that 
\[\int_M \dev_t^*\vol_{G/H} = \int_M\dev_0^* \vol_{G/H}\]
for all $t$.
\end{proof}

\bibliographystyle{plain}
\bibliography{biblio}

\begin{thebibliography}{10}

\bibitem{AlessandriniLi15}
Daniele Alessandrini and Qiongling Li.
\newblock {A}d{S} 3-manifolds and {H}iggs bundles.
\newblock arXiv:.

\bibitem{QuestionsAdS}
Thierry Barbot, Francesco Bonsante, Jeffrey Danciger, William~M. Goldman,
  Fran{\c c}ois Gu\'eritaud, Fanny Kassel, Kirill Krasnov, Jean-Marc Schlenker,
  and Abdelghani Zeghib.
\newblock Some open questions in anti-de {S}itter geometry.
\newblock 2012.
\newblock arXiv:1205.6103.

\bibitem{BFG14}
Nicolas Bergeron, Elisha Falbel, and Antonin Guilloux.
\newblock Tetrahedra of flags, volume and homology of $\mathrm{SL}(3)$.
\newblock {\em Geometry \& Topology}, 18(4):1911--1971, 2014.

\bibitem{BergeronGelander04}
Nicolas Bergeron and Tsachik Gelander.
\newblock A note on local rigidity.
\newblock {\em Geometriae Dedicata}, 107(1):111--131, 2004.

\bibitem{BCG07}
G{\'e}rard Besson, Gilles Courtois, and Sylvestre Gallot.
\newblock In\'egalit\'es de {M}ilnor--{W}ood g\'eom\'etriques.
\newblock {\em Comment. Math. Helv.}, 82:753–803, 2007.

\bibitem{Borel53}
Armand Borel.
\newblock Sur la cohomologie des espaces fibr{\'e}s principaux et des espaces
  homog{\`e}nes de groupes de {L}ie compacts.
\newblock {\em Ann. of Math.}, 57(1):115--207, 1953.

\bibitem{BGG06}
Steven~B. Bradlow, Oscar Garc{\' i}a-{P}rada, and Peter~B. Gothen.
\newblock Maximal surface group representations in isometry groups of classical
  {H}ermitian symmetric spaces.
\newblock {\em Geometriae Dedicata}, 27(2), 2006.

\bibitem{BBI13}
Michelle Bucher, Marc Burger, and Alessandra Iozzi.
\newblock A dual interpretation of the {G}romov--{T}hurston proof of {M}ostow
  rigidity and volume rigidity for representations of hyperbolic lattices.
\newblock In {\em Trends in Harmonic Analysis}, pages 47--76. Springer, 2013.

\bibitem{BIW10}
Marc Burger, Alessandra Iozzi, and Anna Wienhard.
\newblock Surface group representations with maximal {T}oledo invariant.
\newblock {\em Annals of mathematics}, pages 517--566, 2010.

\bibitem{Cartan50}
Henri Cartan.
\newblock La transgression dans un groupe de {L}ie et dans un espace fibr{\'e}
  principal.
\newblock In {\em Colloque de Topologie, CBRM, Bruxelles}, pages 57--71, 1950.

\bibitem{ChernSimons74}
Shiing-Shen Chern and James Simons.
\newblock Characteristic forms and geometric invariants.
\newblock {\em Annals of Mathematics}, 99(1):48--69, 1974.

\bibitem{Francaviglia04}
Stefano Francaviglia.
\newblock Hyperbolic volume of representations of fundamental groups of cusped
  3-manifolds.
\newblock {\em Int. Math. Res. Not.}, 2004(9):425--459, 2004.

\bibitem{GTZ15}
Stavros Garoufalidis, Dylan~P Thurston, and Christian~K Zickert.
\newblock The complex volume of sl (n, c)-representations of 3-manifolds.
\newblock {\em Duke Mathematical Journal}, 164(11):2099--2160, 2015.

\bibitem{Goldman85}
William~M. Goldman.
\newblock Nonstandard {L}orentz space forms.
\newblock {\em J. Differential Geom.}, 21(2):301--308, 1985.

\bibitem{Goldman88}
William~M. Goldman.
\newblock Topological components of spaces of representations.
\newblock {\em Invent. Math.}, 93(3):557--607, 1988.

\bibitem{Hitchin87}
Nigel~J. Hitchin.
\newblock The self-duality equations on a {R}iemann surface.
\newblock {\em Proc. London Math. Soc. (3)}, 55(1):59--126, 1987.

\bibitem{KasselThese}
Fanny Kassel.
\newblock {\em Quotients compacts d'espaces homog\`enes r\'eels ou
  $p$-adiques}.
\newblock PhD thesis, Universit\'e de Paris-Sud 11, 2009.

\bibitem{KimKim13}
Sungwoon Kim and Inkang Kim.
\newblock On deformation spaces of nonuniform hyperbolic lattices.
\newblock 2013.
\newblock arXiv:1310.1154.

\bibitem{KulkarniRaymond85}
Ravi~S. Kulkarni and Frank Raymond.
\newblock {$3$}-dimensional {L}orentz space-forms and {S}eifert fiber spaces.
\newblock {\em J. Differential Geom.}, 21(2):231--268, 1985.

\bibitem{LabourieMSRI}
Fran{\c c}ois Labourie.
\newblock {C}hern--{S}imons invariant and {T}holozan volume formula.
\newblock Lecture in the {\it {L}orentzian geometric structures seminar}, MSRI,
  may 2015.

\bibitem{Milnor58}
John Milnor.
\newblock On the existence of a connection with curvature zero.
\newblock {\em Commentarii Mathematici Helvetici}, 32:215--223, 1958.

\bibitem{Richardson68}
Roger Richardson.
\newblock Compact real forms of a complex semi-simple lie algebra.
\newblock {\em Journal of Differential Geometry}, 2(4):411--419, 1968.

\bibitem{Salein00}
Fran{\c c}ois Salein.
\newblock Vari\'et\'es anti-de {S}itter de dimension 3 exotiques.
\newblock {\em Ann. Inst. Fourier}, 50(1):257--284, 2000.

\bibitem{Tholozan3}
Nicolas Tholozan.
\newblock Dominating surface group representations and deforming closed {A}nti
  de-{S}itter $3$-manifolds.
\newblock 2014.
\newblock arXiv:1307.3315.

\bibitem{TholozanThese}
Nicolas Tholozan.
\newblock {\em Uniformisation des variétés pseudo-riemanniennes localement
  homogènes}.
\newblock PhD thesis, Universit\'e de Nice Sophia-Antipolis, 2014.

\bibitem{Tholozan5}
Nicolas Tholozan.
\newblock The volume of complete anti-de {S}itter 3-manifolds.
\newblock 2015.
\newblock arXiv:1509.04178.

\bibitem{Tholozan6}
Nicolas Tholozan.
\newblock Volume and non-existence of compact clifford--klein forms.
\newblock {\em Preprint}, 2016.
\newblock arXiv:1509.04178.

\bibitem{Toledo89}
Domingo Toledo.
\newblock Representations of surface groups in complex hyperbolic space.
\newblock {\em Journal of Differential Geometry}, 29(1):125--133, 1989.

\end{thebibliography}

\end{document}